\def\@@and{\MakeLowercase{and}}
\theoremstyle{definition}
\newtheorem{defn}{Definition}[section]
\newtheorem{exam}[defn]{Example}
\newtheorem{rem}[defn]{Remark}
\theoremstyle{plain}
\newtheorem{thm}[defn]{Theorem}
\newtheorem{prop}[defn]{Proposition}
\newtheorem{coro}[defn]{Corollary}
\DeclareMathOperator{\udens}{\overline{dens}}
\DeclareMathOperator{\ldens}{\underline{dens}}
\title[T\MakeLowercase{richotomy for the orbits of a hypercyclic operator  on a }B\MakeLowercase{anach space}] %Use the shortened version of the full title
{T\MakeLowercase{richotomy for the orbits of a hypercyclic operator on a }B\MakeLowercase{anach space}}
\author[J. L\MakeLowercase{i}]{J\MakeLowercase{ian} L\MakeLowercase{i}}
\address[J. Li]{Department of Mathematics,
Shantou University, Shantou, 515821, Guangdong, China}
\email{lijian09@mail.ustc.edu.cn}
\urladdr{https://orcid.org/0000-0002-8724-3050}
\subjclass[2020]{Primary: 47A16; Secondary: 37B05}
\keywords{Hypercyclic operators, orbits, absolutely mean irregular vectors, mean Li-Yorke chaos}
\date{\today}
\begin{document}

\begin{abstract}
	We obtain a trichotomy for the orbits of a hypercyclic operator $T$ on a separable Banach space $X$:
	(1) every vector is mean asymptotic to zero;
	(2) generic vectors are absolutely mean irregular;
	(3) every  hypercyclic vector is mean divergent to infinity.
	Examples of weighted backward shifts on $\ell^p$ show that all three cases can happen.
\end{abstract}

\maketitle

%%%%%%%%%%%%%%

\section{Introduction}
Let $(X,\Vert\,\cdot\,\Vert)$ be a separable Banach space  and $T\colon X\to X$ be a bounded linear operator.
For a vector $x\in X$, the orbit of $x$ is the set $\{T^nx\colon x\geq 0\}$, where $T^n$ is the $n$th composition of $T$.
The study of the behaviors of the orbits of an operator has attracted a lot of attention.
Particularly, the Chapter III of the book \cite{B1988} is entitled \emph{The Orbit of a Linear Operator}.
According to \cite{B1988}*{Chapter III}, by ``regular'' orbits, we mean for instance that the orbit may tend to infinity, or may tend to zero, or may stay inside a ball centered at $0$, or may stay outside a ball centered at $0$.
Thanks to the Jordan decomposition theorem, every orbit for an operator on a finite-dimensional space is regular, see e.g.\@ \cite{GP2011}*{Proposition 2.57}.
The opposite case is the \emph{irregular orbits}, that is which satisfy
\[
	\liminf_{n\to\infty}\Vert T^nx\Vert =0 \text{ and } \limsup_{n\to\infty}\Vert T^nx\Vert =\infty.
\]
A special case of irregular orbits are those of hypercyclic vectors.
A vector $x\in X$ is called a \emph{hypercyclic vector} if the orbit of $x$ is dense in the whole space,
and an operator $T$ is called \emph{hypercyclic} if there is some hypercyclic vector in $X$.
In fact, a hypercyclic vector is called a transitive point in the context of topological dynamics.
By the Birkhoff transitivity theorem (see e.g. \cite{GP2011}*{Theorem 1.16}), if $T$ is hypercyclic then the collection of hypercyclic vectors is a dense $G_\delta$ subset of $X$.

In \cite{BBMP2011}*{Theorem 5}, Berm\'udez et al.\@ characterized Li-Yorke chaos in terms of the existence of irregular vectors.
Recall that an operator $T\colon X\to X$ is called \emph{Li-Yorke chaotic}
if there exists an uncountable subset $S$ of $X$ such that every pair $x,y\in S$ of distinct points we have
\[
	\liminf_{n\to\infty}\Vert T^nx-T^ny\Vert =0 \text{ and } \limsup_{n\to\infty}\Vert T^nx-T^ny\Vert >0.
\]
In the study of distributional chaos of operators on Banach spaces \cite{BBPW2018}, Bernardes et al.\@ introduced the concept of absolutely mean irregular vectors,
that is a vector $x\in X$ is an \emph{absolutely mean irregular} if
\[
	\liminf_{n\to\infty} \frac{1}{n}\sum_{i=1}^n \Vert T^ix\Vert =0 \text{ and } \limsup_{n\to\infty}\frac{1}{n}\sum_{i=1}^n \Vert T^ix\Vert =\infty.
\]
Furthermore, in \cite{BBP2020} Bernardes et al.\@ characterized mean Li-Yorke chaos in terms of the existence of absolutely mean  irregular vectors,
and characterized dense mean Li-Yorke chaos in terms of the existence of a dense (or residual) set of absolutely mean irregular vectors.
Recall that an operator $T\colon X\to X$ is called \emph{(dense) mean Li-Yorke chaotic}
if there exists a (dense) uncountable subset $S$ of $X$ such that every pair $x,y\in S$ of distinct points we have
\[
	\liminf_{n\to\infty}\frac{1}{n}\sum_{i=1}^n \Vert T^ix-T^iy\Vert =0 \text{ and } \limsup_{n\to\infty}\frac{1}{n}\sum_{i=1}^n \Vert T^ix-T^iy\Vert>0.
\]

In this paper, we study behaviors of orbits in the mean sense of a hypercyclic operator and
obtain the following trichotomy.

\begin{thm}\label{thm:trich-hypercity}
	Let $T$ be a bounded linear operator on a separable Banach space $X$.
	If $T$ is hypercyclic, then exactly one of three assertions holds:
	\begin{enumerate}
		\item  every vector is mean asymptotic to zero, that is for every $x\in X$,
		      \[
			      \lim_{n\to\infty}\frac{1}{n}\sum_{i=1}^{n}\Vert T^ix\Vert=0;
		      \]
		\item generic vectors are absolutely mean irregular, that is there exists a residual subset $X_0$ of $X$ such that for every $x\in X_0$,
		      \[
			      \liminf_{n\to\infty}\frac{1}{n}\sum_{i=1}^{n}\Vert T^ix\Vert=0
		      \]
		      and
		      \[
			      \limsup_{n\to\infty} \frac{1}{n}\sum_{i=1}^n \Vert T^i x\Vert =\infty;
		      \]
		\item every hypercyclic vector is mean divergent to infinity, that is  for every hypercyclic vector $x\in X$, 
		      \[
			      \lim_{n\to\infty} \frac{1}{n}\sum_{i=1}^n \Vert T^i x\Vert =\infty.
		      \]
	\end{enumerate}
\end{thm}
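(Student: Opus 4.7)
The plan is to work with the continuous seminorms $a_n(x) := \frac{1}{n}\sum_{i=1}^n \|T^i x\|$ on $X$ and the $T$-invariant, positively homogeneous functionals $\phi(x) := \liminf_n a_n(x)$ and $\Phi(x) := \limsup_n a_n(x)$. The closed, balanced, convex sets $F_M := \{x : a_n(x) \le M \text{ for all } n\}$ exhaust the subspace $\{\Phi < \infty\}$, so a Baire--category argument on the $F_M$'s yields the fundamental dichotomy on which the trichotomy hangs: either the family $\{a_n\}_n$ is uniformly equicontinuous (so $a_n(x) \le C\|x\|$ for some $C$ and every $n$), or the $G_\delta$ set $\{\Phi = \infty\}$ is residual.

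In the first case I would show (1). For any hypercyclic $y$ and any $\varepsilon > 0$, density of the orbit gives $N_0$ with $\|T^{N_0}y\| < \varepsilon$; applying the uniform bound to $T^{N_0}y$ yields $\sum_{i=N_0+1}^{N_0+m}\|T^i y\| \le Cm\varepsilon$ for every $m$, and splitting the Ces\`aro sum at $N_0$ gives $a_N(y) \le N_0\, a_{N_0}(y)/N + C\varepsilon$; hence $\lim_N a_N(y) = 0$ for every hypercyclic $y$. Using the uniform bound through $a_n(x) \le a_n(x_k) + C\|x - x_k\|$, the set $\{x : \lim_n a_n(x) = 0\}$ is closed, and being a $T$-invariant linear subspace containing the dense set of hypercyclic vectors it equals all of $X$.

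In the second case I would split on whether some hypercyclic $y$ has $\phi(y) < \infty$. If yes, I claim $\{\phi = 0\}$ is residual, which combined with the residual $\{\Phi = \infty\}$ gives (2). Writing $\{\phi = 0\} = \bigcap_{N,k} V_{N,1/k}$ with $V_{N,\eta} := \{x : a_n(x) < \eta \text{ for some } n \ge N\}$, it suffices to prove each open $V_{N,\eta}$ is dense. Given a nonempty open $U$, choose $\delta > 0$ with $\delta\phi(y) < \eta$ (possible because $\phi(y)$ is finite); density of the orbit of $y$ lets me pick $m$ with $\delta T^m y \in U$, and by $T$-invariance and positive homogeneity of $\phi$ the vector $x := \delta T^m y \in U$ satisfies $\phi(x) = \delta\phi(y) < \eta$, so $a_n(x) < \eta$ for infinitely many $n$ and hence for some $n \ge N$. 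If instead every hypercyclic $y$ satisfies $\phi(y) = \infty$, we are already in (3).

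The delicate step is the last density argument: because $\phi$ is neither upper nor lower semicontinuous, the set $\{\phi = 0\}$ cannot be produced by taking limits, and the only available vector $y$ has $\phi(y) < \infty$ but not necessarily $\phi(y) = 0$. The idea is that positive homogeneity of $\phi$ lets us drive $\delta\phi(y)$ below any prescribed threshold by shrinking $\delta$, while hypercyclicity lets us place the scaled orbit vector $\delta T^m y$ inside any nonempty open set. Pairwise incompatibility of (1)--(3) is then immediate from the definitions: (1) forces $\Phi \equiv 0$, contradicting (2) and (3); (2) produces a residual, hence nonempty, set of hypercyclic vectors with $\phi = 0$, contradicting (3).
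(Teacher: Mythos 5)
Your proof is correct, and its overall architecture is the same as the paper's: two independent dichotomies, one governing $\limsup_n \frac{1}{n}\sum_{i=1}^n\Vert T^ix\Vert$ and one governing $\liminf_n \frac{1}{n}\sum_{i=1}^n\Vert T^ix\Vert$, whose combinations collapse to the three cases, with exclusivity of (2) and (3) obtained by intersecting two residual sets. Your second dichotomy is essentially identical to the paper's Proposition~\ref{prop:dict-mean-proc}: you use the open sets $V_{N,\eta}$ and the dense scaled orbits $\delta T^m y$, the paper uses the $G_\delta$ sets $X_k=\{y\colon \liminf_n a_n(y)\le M/k\}$ and the dense sets $\{\frac{1}{k}T^mx\}$; the computation (shift-invariance and positive homogeneity of $\liminf_n a_n$) is the same. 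Where you genuinely diverge is the first dichotomy. The paper imports it from \cite{JL2024} via Theorem~\ref{thm:dich-mean-eq} and Proposition~\ref{prop:mean-sen-eq} (every operator is either mean equicontinuous or mean sensitive, and mean sensitivity is equivalent to residuality of $\{\limsup_n a_n=\infty\}$), and then only has to prove Proposition~\ref{prop:mean-eq} about mean equicontinuous operators. You instead prove the dichotomy from scratch by Baire category applied to the closed, convex, balanced sets $F_M=\{x\colon a_n(x)\le M\ \text{for all } n\}$: either all are nowhere dense, so $\{\limsup_n a_n=\infty\}$ is a residual $G_\delta$, or one has nonempty interior, which by convexity and balancedness yields the uniform bound $a_n(x)\le C\Vert x\Vert$ (i.e.\ absolute Ces\`aro boundedness, which the paper notes is equivalent to mean equicontinuity); your splitting-at-$N_0$ estimate and the uniform Lipschitz bound $|a_n(x)-a_n(z)|\le C\Vert x-z\Vert$ then reproduce the content of Proposition~\ref{prop:mean-eq} and give case (1). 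This Banach--Steinhaus-style argument makes the proof self-contained at no cost in generality; the paper's route, by contrast, records the extra dynamical equivalence of Corollary~\ref{coro:hyper-mean-eq} (for hypercyclic $T$, mean equicontinuity is equivalent to case (1)) as a by-product.
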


Let $\mathcal{L}(X)$ be the collection of all bounded linear operators from $X$ to itself.
The \emph{strong operator topology} on $\mathcal{L}(X)$
is defined as follows: every $T\in \mathcal{L}(X)$ has a neighborhood basis consisting of sets of the form
\[
	V_{x_1,\dotsc,x_n,\varepsilon,T}=\{
	S\in \mathcal{L}(X)\colon \|Sx_i-Tx_i\|<\varepsilon,\ i=1,\dotsc,n\},
\]
where $x_1,\dotsc,x_n\in X$ and $\varepsilon>0$.
It is not easy to see that, the Rolewicz's operator, $T\colon \ell^p(\mathbb{N})\to \ell^p(\mathbb{N})$, $(x_1,x_2,\dots)\mapsto \lambda(x_2,x_3,\dotsc)$ with $|\lambda|>1$, has a residual set of absolutely mean irregular vectors, see e.g. \cite{BBP2020}*{Corollary 30} or Corollary~\ref{coro:Kitai-criterion} in Section 3.
The following result reveals that there are plenty of hypercyclic operators with a residual set of  absolutely mean irregular vectors.

\begin{thm}\label{thm:SOT-dense}
	If $X$ is an infinite-dimensional separable Banach space,
	then the collection of hypercyclic operators with a residual set of  absolutely mean irregular vectors  is dense in  $\mathcal{L}(X)$ with respect to the strong operator topology.
\end{thm}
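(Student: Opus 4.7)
The plan is to fix an arbitrary $T\in\mathcal{L}(X)$ and a basic SOT-neighborhood $V=V_{x_1,\dotsc,x_n,\varepsilon,T}$, and to construct an operator $S\in V$ that satisfies the sufficient condition of Corollary~\ref{coro:Kitai-criterion}; such an $S$ is automatically hypercyclic and admits a residual set of absolutely mean irregular vectors, giving the theorem. The construction follows the familiar template of SOT-density arguments for hypercyclic operators: build $S=T+K$ where the correction $K$ kills each $x_i$ (so $Sx_i=Tx_i$ and $S\in V$ automatically) and forces $S$ to act as a suitable weighted backward shift on a basic sequence built inside $X$.

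Concretely, I would proceed through three steps. First, after reducing to the case where $x_1,\dotsc,x_n$ are linearly independent, I would set $M=\mathrm{span}\{x_i\}$ and use Kadets--Snobar to choose a bounded projection $P\colon X\to M$ with complement $Z=\ker P$. Second, Mazur's basic sequence theorem applied in the infinite-dimensional $Z$ yields a normalized basic sequence $(e_m)_{m\geq 0}\subset Z$ with bounded biorthogonal functionals $\phi_m\in Z^*$; extending by $\phi_m\circ P=0$, I regard $\phi_m\in X^*$ and obtain the crucial vanishing $\phi_m(x_i)=0$. Third, after rescaling to $\tilde e_m=\alpha_m e_m$ (with $\alpha_m>0$ to be chosen) and picking weights $(w_m)$, I define the correction
\[
	K=\sum_{m\geq 1}w_m\,(\phi_m/\alpha_m)\otimes\tilde e_{m-1}-\sum_{m\geq 0}(\phi_m/\alpha_m)\otimes T\tilde e_m,
\]
so that $Kx_i=0$, $S\tilde e_m=w_m\tilde e_{m-1}$ for $m\geq 1$, and $S\tilde e_0=0$. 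The weights and rescaling should be chosen so that the weighted shift on $\overline{\mathrm{span}}(\tilde e_m)$ satisfies the hypothesis of Corollary~\ref{coro:Kitai-criterion} (for instance a Rolewicz-type $w_m\equiv\lambda>1$ with right inverse $\tilde e_m\mapsto\tilde e_{m+1}/\lambda$ on the dense set $\bigcup_N\mathrm{span}\{\tilde e_0,\dotsc,\tilde e_N\}$).

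The hard part will be making the correction $K$ a well-defined \emph{bounded} operator on all of $X$, since the formal series above need not converge absolutely for an arbitrary $x\in X$ (the scalars $\phi_m(x)$ need not decay). I plan to handle this either by arranging the rescaling so that $\overline{\mathrm{span}}(\tilde e_m)$ is complemented in $X$ and applying the shift only through a bounded projection onto it, or by tuning $(\alpha_m)$ and $(w_m)$ to make $K$ nuclear while keeping the shift's \emph{relative} dynamics inside $\overline{\mathrm{span}}(\tilde e_m)$ expansive enough for Corollary~\ref{coro:Kitai-criterion}. The core bookkeeping is balancing the three parameter sequences---shift weights, rescaling factors, and biorthogonal functional norms---against the boundedness of $S$ and the Kitai-type hypothesis; once this balance is achieved, $S\in V$ by construction, $S$ satisfies Corollary~\ref{coro:Kitai-criterion}, and the theorem follows.
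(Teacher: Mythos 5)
Your overall strategy (a direct local perturbation $S=T+K$ inside each basic SOT-neighborhood) is genuinely different from the paper's, which builds a \emph{single} operator $I+T_0$ satisfying the Kitai criterion with a non-trivial fixed point (via Shkarin's construction), applies Corollary~\ref{coro:Kitai-criterion} to it once, and then obtains density from the fact that the similarity orbit of $I+T_0$ is dense in $\mathcal{L}(X)$ for the strong operator topology and the relevant property is similarity-invariant. The distinction matters, because your route runs into a gap that the paper's route is specifically designed to sidestep.

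The gap: the Kitai criterion, as used in Corollary~\ref{coro:Kitai-criterion}, requires the sets $E$ and $F$ to be dense in all of $X$, and the corollary's conclusion (a dense absolutely mean irregular manifold, hence a residual set of absolutely mean irregular vectors) likewise concerns density in $X$. In your construction the basic sequence $(\tilde e_m)$ lives inside $Z=\ker P$, and the set $\bigcup_N\mathrm{span}\{\tilde e_0,\dotsc,\tilde e_N\}$ is dense only in the closed subspace $Y=\overline{\mathrm{span}}\{\tilde e_m\colon m\geq 0\}$, which is a proper, non-dense closed subspace of $X$: a basic sequence produced by Mazur's theorem has no reason to span a dense subspace of an arbitrary $X$, and you have in any case forced it inside $\ker P$. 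Consequently $S$ acts as a weighted shift only on the $S$-invariant subspace $Y$; on the rest of $X$ it is essentially $T$ plus a correction, and there is no reason for $S$ to be hypercyclic on $X$, to satisfy the Kitai criterion on $X$, or to possess an absolutely mean irregular manifold that is dense in $X$ rather than in $Y$. Repairing this requires a \emph{fundamental, total, bounded} biorthogonal system for the whole space (Ovsepian--Pe{\l}czy\'nski) and a reconciliation of the constraint $\phi_m(x_i)=0$ (needed for $Sx_i=Tx_i$) with totality of the system --- which is essentially the content of Shkarin's theorem and of the similarity-orbit argument from \cite{GP2011}*{Theorem 8.18} that the paper invokes instead. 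The difficulties you do flag (boundedness of $K$, tuning $\alpha_m$ and $w_m$) are real but secondary; the failure of density in $X$ is what breaks the argument as written.
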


It is shown in \cite{BBP2020}*{Theorem 17} that
if  $T\colon X\to X$ is a bounded linear operator on a separable Banach space $X$ then $T$ is a densely mean Li-Yorke chaotic if and only if it has a residual set of absolutely mean irregular vectors. 
So we have the following consequence.
\begin{coro}
Every infinite-dimensional separable Banach space admits a dense mean Li-Yorke chaotic operator.
\end{coro}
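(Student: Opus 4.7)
The plan is to combine Theorem~\ref{thm:SOT-dense} with the cited characterization \cite{BBP2020}*{Theorem 17} in an essentially immediate way. Let $X$ be any infinite-dimensional separable Banach space. The goal is to produce a single bounded linear operator $T\colon X\to X$ that is densely mean Li-Yorke chaotic.

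First I would invoke Theorem~\ref{thm:SOT-dense}: on $X$, the family of hypercyclic operators admitting a residual set of absolutely mean irregular vectors is dense in $\mathcal{L}(X)$ in the strong operator topology. Density is actually stronger than needed here; nonemptiness is what the corollary requires, so I would simply pick any operator $T$ from this family (for example, by choosing any neighborhood of the zero operator and selecting any member of the family lying in it). By construction, $T$ has a residual set of absolutely mean irregular vectors.

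Then I would apply \cite{BBP2020}*{Theorem 17}, which states that a bounded linear operator on a separable Banach space has a residual set of absolutely mean irregular vectors if and only if it is densely mean Li-Yorke chaotic. Applied to the operator $T$ produced in the previous step, this yields that $T$ is densely mean Li-Yorke chaotic, completing the proof.

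There is no real obstacle: the corollary is a direct logical consequence of Theorem~\ref{thm:SOT-dense} together with the equivalence quoted from \cite{BBP2020}. The substantive content lives entirely in the proofs of those two results; here one only needs to observe that ``dense in $\mathcal{L}(X)$'' implies ``nonempty''.
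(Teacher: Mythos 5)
Your proposal is correct and matches the paper exactly: the corollary is stated immediately after the observation that, by \cite{BBP2020}*{Theorem 17}, dense mean Li--Yorke chaos is equivalent to having a residual set of absolutely mean irregular vectors, so it follows from Theorem~\ref{thm:SOT-dense} by picking any operator from the (nonempty, since dense) family. Nothing is missing.
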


Theorems \ref{thm:trich-hypercity} and~\ref{thm:SOT-dense} are proved in Sections 2 and 3 respectively.
In Section 3 we also strengthen Theorem~\ref{thm:SOT-dense} a litter bit for operators on a Hilbert space, see Theorem~\ref{thm:Hilbert-SOT-star},
and present some examples of weighted backward shifts on $\ell^p$ for the three cases in Theorem~\ref{thm:trich-hypercity}.

%%%%%%%%%%%%%

\section{Proof of Theorem~\ref{thm:trich-hypercity}}

The aim of this section is to prove Theorem~\ref{thm:trich-hypercity}.
To this end, we need some preparations.

For a (general) dynamical system, we mean a pair $(X,T)$, where $(X,d)$ is a metric space and $T\colon X\to X$ is a continuous map.
Following~\cite{LTY2015}, we say that a dynamical system $(X,T)$ is \emph{mean equicontinuous}
if for every $\varepsilon>0$ there exists $\delta>0$ such that
for every $x,y\in X$ with $d(x,y)<\delta$,
\[
	\limsup_{n\to\infty}\frac{1}{n}\sum_{i=1}^{n}d(T^ix,T^iy)<\varepsilon,
\]
and
\emph{mean sensitive}
if there exists $\delta>0$ such that for non-empty open subset $U$ of $X$, 
there exist two points $x$ and $y$ in $U$ satisfying 
\[
	\limsup_{n\to\infty}\frac{1}{n}\sum_{i=1}^{n}d(T^ix,T^iy)\geq \delta.
\]

It is show in \cite{LTY2015}*{Corollary 5.5} that for a dynamical system on a compact metric space, if it is minimal then it either mean equicontinuous or mean sensitive. 
The authors in \cite{JL2024} studied mean equicontinutity and mean sensitivity for endomorphisms of completely metrizable groups and obtained the following dichotomy.

\begin{thm}[\cite{JL2024}*{Theorem 4.9}] \label {thm:dich-mean-eq}
	Let $T$ be a bounded linear operator on a Banach space $X$.
	Then either $T$ is mean equicontinuous or mean sensitive.
\end{thm}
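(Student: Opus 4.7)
The plan is to exploit linearity in order to encode both properties via a single auxiliary function, and then invoke the trivial dichotomy bounded/unbounded. Since $d(T^ix,T^iy)=\|T^i(x-y)\|$, the Ces\`aro averages appearing in both definitions depend only on the difference $z:=x-y$. This motivates introducing
\[
\phi(z) := \limsup_{n\to\infty}\frac{1}{n}\sum_{i=1}^{n}\|T^{i}z\|\in[0,\infty],
\]
which is positively homogeneous, $\phi(\lambda z)=|\lambda|\phi(z)$, and subadditive.

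The next step would be to translate both properties into statements about $\phi$. On the one hand, mean equicontinuity says precisely that $\phi$ is continuous at the origin, and because $\phi$ is positively homogeneous this is equivalent to $\sup_{\|z\|\leq 1}\phi(z)<\infty$ (equivalently, $\phi(z)\leq C\|z\|$ for some constant $C$ and all $z$). On the other hand, I claim that mean sensitivity is equivalent to $\sup_{\|z\|\leq 1}\phi(z)=\infty$. Indeed, if the supremum is infinite, positive homogeneity makes $\phi$ unbounded on every ball around $0$; so given any non-empty open $U$ and $p\in U$, one can choose $z$ with $p+z\in U$ and $\phi(z)\geq 1$, so that $(p,p+z)$ witnesses mean sensitivity with constant $\delta=1$. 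Conversely, if $T$ is mean sensitive with constant $\delta$, applying the definition to the open balls $B(0,1/n)$ produces pairs whose differences $z_n$ satisfy $\|z_n\|<2/n$ and $\phi(z_n)\geq\delta$; rescaling to $z_n/\|z_n\|$ exhibits $\phi$ as unbounded on the unit sphere.

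Combining these reformulations, the theorem reduces to the tautology that $\sup_{\|z\|\leq 1}\phi(z)$ is either finite or infinite, corresponding respectively to mean equicontinuity and mean sensitivity. The only minor subtlety is that $\phi$ may take the value $\infty$, but this naturally falls on the mean-sensitive side of the argument; beyond careful bookkeeping with positive homogeneity I do not anticipate any serious obstacle.
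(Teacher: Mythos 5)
Your argument is correct, and it is worth noting that the paper itself offers no proof of this statement: it is imported verbatim from \cite{JL2024}*{Theorem 4.9}, so there is no in-paper argument to compare against. Your reduction to the sublinear functional $\phi(z)=\limsup_n \frac{1}{n}\sum_{i=1}^n\Vert T^iz\Vert$ is sound: $\phi$ is positively homogeneous (allowing the value $+\infty$), mean equicontinuity is exactly continuity of $\phi$ at $0$, which by homogeneity is equivalent to $\sup_{\Vert z\Vert\leq 1}\phi(z)<\infty$, and your two implications establishing that mean sensitivity is equivalent to $\sup_{\Vert z\Vert\leq 1}\phi(z)=\infty$ both check out (in the forward direction note $z_n\neq 0$ since $\phi(0)=0<\delta$, so the rescaling is legitimate; in the reverse direction homogeneity gives $\sup_{\Vert z\Vert\leq r}\phi(z)=r\sup_{\Vert z\Vert\leq 1}\phi(z)=\infty$ for every $r>0$, which is what lets you place the pair inside an arbitrary open set). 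The dichotomy then really is the tautology finite-or-infinite, and the two alternatives are mutually exclusive since boundedness of $\phi$ on a ball around $0$ forbids any sensitivity constant. One caveat on scope: the cited source proves the dichotomy for endomorphisms of completely metrizable groups, where scalar multiplication and hence homogeneity are unavailable, so it presumably relies on Baire-category or subgroup arguments; your proof is a genuinely more elementary route that buys simplicity at the cost of working only in the linear (Banach space) setting, which is all this paper needs.
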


Combining \cite{JL2024}*{Lemma 4.5 and Proposition 4.33}, we have the following result.

\begin{prop}\label{prop:mean-sen-eq}
	Let $T$ be a bounded linear operator on a Banach space $X$.
	Then $T$ is mean sensitive if and only if there exists a residual subset $X_0$ of $X$ such that for every $x\in X_0$,
	\[
		\limsup_{n\to\infty} \frac{1}{n}\sum_{i=1}^n \Vert T^i x\Vert =\infty.
	\]
\end{prop}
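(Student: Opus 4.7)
The plan is to prove the equivalence directly using two ingredients: a Banach--Steinhaus argument for the continuous seminorms $g_n(x) := \tfrac{1}{n}\sum_{i=1}^{n}\Vert T^ix\Vert$, and a subspace-versus-complement dichotomy for the set
\[
A = \Bigl\{x\in X \colon \limsup_{n\to\infty} g_n(x) = \infty \Bigr\},
\]
which is the candidate residual set. For the implication from residual vectors to mean sensitivity, I would fix any $\delta>0$ and any non-empty open $U\subseteq X$; choose $v\in U$ and $r>0$ with $B(v,r)\subseteq U$. Since $X_0$ is dense, pick $z\in X_0\cap B(0,r)$. Then both $v$ and $v-z$ lie in $U$, linearity gives $T^iv-T^i(v-z)=T^iz$, and
\[
\limsup_{n\to\infty}\frac{1}{n}\sum_{i=1}^{n}\Vert T^iv-T^i(v-z)\Vert = \limsup_{n\to\infty} g_n(z) = \infty \geq \delta,
\]
which witnesses mean sensitivity.

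For the converse, I would argue in four short steps. \emph{Step~1 ($G_\delta$):} by continuity of each $g_n$,
\[
A=\bigcap_{k\geq 1}\bigcap_{N\geq 1}\bigcup_{n\geq N}\{x\in X\colon g_n(x)>k\}
\]
is a countable intersection of open sets. \emph{Step~2 (subspace):} each $g_n$ is a continuous seminorm, and $\limsup(a_n+b_n)\leq \limsup a_n+\limsup b_n$, so the complement $X\setminus A$ is closed under addition and scalar multiplication. \emph{Step~3 (non-empty):} if $X\setminus A=X$, then for every $x$ the bound $\limsup_n g_n(x)<\infty$ glues with the finite initial segment to give $\sup_n g_n(x)<\infty$, and the Banach--Steinhaus theorem for continuous seminorms produces $M>0$ with $g_n(x)\leq M\Vert x\Vert$ for all $n$ and $x$; in any ball of radius less than $\delta/(2M)$, any two points $x,y$ then satisfy $\limsup g_n(x-y)\leq M\Vert x-y\Vert<\delta$, contradicting mean sensitivity. \emph{Step~4 (dense):} fix $x_0\in A$. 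For any $v\in X$ and $\varepsilon>0$, at most one value of $t\in\mathbb{R}$ can put $v+tx_0$ into the proper subspace $X\setminus A$ (else, subtracting two such points would place $x_0$ in $X\setminus A$), so the segment $\{v+tx_0\colon |t|<\varepsilon/\Vert x_0\Vert\}$ meets $A$ within $B(v,\varepsilon)$. Combined with Step~1, this makes $A$ residual.

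The main obstacle is Step~3: one must upgrade pointwise finiteness of $\limsup_n g_n$ to pointwise finiteness of $\sup_n g_n$ and then invoke the seminorm version of Banach--Steinhaus (itself proved by Baire category applied to the closed sets $\{x\colon \sup_n g_n(x)\leq k\}$). The remaining steps are soft topological arguments; once the uniform bound $g_n(x)\leq M\Vert x\Vert$ is available, the $G_\delta$ and subspace observations immediately promote the existence of a single vector in $A$ to residuality of $A$.
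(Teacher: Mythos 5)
Your proof is correct and complete, but it takes a different route from the paper: the paper offers no argument at all for this proposition, instead deriving it by combining Lemma~4.5 and Proposition~4.33 of the cited reference \cite{JL2024}. You give a self-contained proof from first principles. The easy direction (a residual set of vectors with divergent averages forces mean sensitivity, via $T^iv-T^i(v-z)=T^iz$) is the natural one and surely underlies the cited lemma as well. Your converse is where the real content lies, and it is essentially a direct reconstruction of the machinery the paper outsources: your Step~3 is exactly the dichotomy ``not mean sensitive $\Rightarrow$ absolutely Ces\`aro bounded'' (compare the paper's later remark that absolute Ces\`aro boundedness is equivalent to mean equicontinuity, citing \cite{JL2024}*{Theorem 4.32}), obtained here by upgrading $\limsup_n g_n(x)<\infty$ to $\sup_n g_n(x)<\infty$ and applying Banach--Steinhaus for the continuous seminorms $g_n$; and your Steps~1, 2 and~4 (the set $A$ is $G_\delta$, its complement is a linear subspace, and a $G_\delta$ set whose complement is a proper subspace is residual via the line-segment trick) promote a single divergent vector to a residual set. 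All four steps check out, including the small points that $0\notin X_0$ (so the chosen $z$ genuinely witnesses divergence) and that the initial finite segment of the $g_n(x)$ is harmless. What your approach buys is independence from the external reference and an explicit identification of the residual set as the canonical $G_\delta$ set $A$; what the paper's citation buys is brevity and consistency with the broader framework of \cite{JL2024}, where the same dichotomy is developed for general completely metrizable groups.
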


We have the following property about the orbits of mean equicontinuous operators.
\begin{prop}\label{prop:mean-eq}
Let $T$ be a  bounded linear operator on a Banach space $X$.
Assume that $T$ is mean equicontinuous.
\begin{enumerate}
    \item For a vector $x\in X$, if $\liminf_{n\to\infty}\Vert T^nx \Vert =0$,
    then 
    \[
		\lim_{n\to\infty}\frac{1}{n}\sum_{i=1}^{n}\Vert T^ix\Vert=0.
	\]
    \item The collection of vectors $x$ in $X$ satisfying 
       \[
		\lim_{n\to\infty}\frac{1}{n}\sum_{i=1}^{n}\Vert T^ix\Vert=0.
	\]
 is a closed subspace of $X$.
\end{enumerate}
\end{prop}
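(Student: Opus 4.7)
The plan begins with a reformulation of mean equicontinuity adapted to the linear setting. Because $T$ is linear, $d(T^i x,T^i y)=\|T^i(x-y)\|$, so the mean equicontinuity condition is equivalent to the following statement about a single vector: for every $\varepsilon>0$ there is $\delta>0$ such that $\|z\|<\delta$ implies
\[
	\limsup_{n\to\infty}\frac{1}{n}\sum_{i=1}^{n}\|T^i z\|<\varepsilon.
\]
This reformulation, applied with $z=T^{n_k}x$ or with $z=x-x_k$, is what drives both assertions.

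For (1), assume $\liminf_{n\to\infty}\|T^n x\|=0$ and fix $\varepsilon>0$. Choose $\delta>0$ as above, pick $n_k$ with $\|T^{n_k}x\|<\delta$, and set $z=T^{n_k}x$. For $m>n_k$, split
\[
	\frac{1}{m}\sum_{i=1}^{m}\|T^i x\|=\frac{1}{m}\sum_{i=1}^{n_k}\|T^i x\|+\frac{m-n_k}{m}\cdot\frac{1}{m-n_k}\sum_{j=1}^{m-n_k}\|T^{j} z\|.
\]
The first summand tends to $0$ as $m\to\infty$ with $k$ fixed, while the limsup of the second summand is at most $\varepsilon$ by the choice of $\delta$. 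Hence $\limsup_m \frac{1}{m}\sum_{i=1}^m\|T^i x\|\leq\varepsilon$, and letting $\varepsilon\to 0$ gives the conclusion.

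For (2), write $A$ for the set in question. Linearity is immediate from the triangle inequality and scalar homogeneity of the norm, so $A$ is a linear subspace. For closedness, suppose $x_k\to x$ with each $x_k\in A$, and let $\varepsilon>0$. Pick $\delta$ as above, choose $k$ so large that $\|x-x_k\|<\delta$, and estimate
\[
	\frac{1}{n}\sum_{i=1}^{n}\|T^i x\|\leq \frac{1}{n}\sum_{i=1}^{n}\|T^i x_k\|+\frac{1}{n}\sum_{i=1}^{n}\|T^i(x-x_k)\|.
\]
The first term tends to $0$ since $x_k\in A$, and the limsup of the second term is less than $\varepsilon$ by mean equicontinuity; since $\varepsilon$ was arbitrary, $x\in A$. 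The only mildly delicate point in the whole argument is the $m\to\infty$ limit in the splitting for (1), where one must keep $k$ fixed and use that the head of the Cesàro average is absorbed by the $1/m$ prefactor before invoking mean equicontinuity on the shifted tail.
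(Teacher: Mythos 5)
Your proposal is correct and follows essentially the same route as the paper: both parts rest on the single-vector reformulation of mean equicontinuity ($\|z\|<\delta$ forces $\limsup_n \frac{1}{n}\sum_{i=1}^n\|T^iz\|<\varepsilon$), with part (1) proved by shifting to $T^{n_k}x$ and absorbing the finite head of the Ces\`aro average, and part (2) by the triangle inequality applied to $x-x_k$. The only differences are notational (the paper reindexes the average as $\frac{1}{n+q}\sum_{i=1}^{n+q}$ rather than splitting at $n_k$ inside $\frac{1}{m}\sum_{i=1}^m$), so there is nothing of substance to add.
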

\begin{proof}
As $T$ is mean equicontinuous,
for every $\varepsilon>0$ there exists $\delta>0$
	such that for any $u,v\in X$ with $\Vert u-v\Vert <\delta$,
	\[
		\limsup_{n\to\infty}  \frac{1}{n}\sum_{i=1}^{n}\Vert T^i(u-v)\Vert<\varepsilon.
	\]
(1) 
Fix a vector $x\in X$ with $\liminf_{n\to\infty}\Vert T^nx \Vert =0$.
For the above $\delta$, there exists $q\in\mathbb{N}$ such that $\Vert T^qx\Vert <\delta$.
	Then
	\[
		\limsup_{n\to\infty}  \frac{1}{n}\sum_{i=1}^{n}\Vert T^i(T^qx)\Vert<\varepsilon.
	\]
	and
	\begin{align*}
		\limsup_{n\to\infty}  \frac{1}{n}\sum_{i=1}^{n}\Vert T^ix \Vert & = \limsup_{n\to\infty}  \frac{1}{n+q}\sum_{i=1}^{n+q}\Vert T^ix \Vert   \\
	& = \limsup_{n\to\infty}  \biggl (\frac{n}{n+q} \frac{1}{n} \sum_{i=1}^{n}\Vert T^i(T^qx) \Vert + \frac{1}{n+q}\sum_{i=1}^q  \Vert T^ix\Vert \biggr) \\
	 & <\varepsilon.
	\end{align*}
	By the arbitrariness of $\varepsilon$,
	one has
	\[
		\lim_{n\to\infty}  \frac{1}{n}\sum_{i=1}^{n}\Vert T^ix\Vert=0.
	\]

(2) By the triangle inequality of norm, it is easy to see that 
 \[
	X_0=\Bigl \{x\in X\colon \lim_{n\to\infty}\frac{1}{n}\sum_{i=1}^{n}\Vert T^ix\Vert=0 \Bigr\}
	\]
 is a subspace of $X$. 
Let $y\in X $ be an accumulation point of $X_0$.
For the above $\delta$, 
there exists a vector $z\in X_0$ such that $\Vert z-y\Vert <\delta$.
	Then
	\begin{align*}
		\limsup_{n\to\infty}   \frac{1}{n}\sum_{i=1}^{n}\Vert T^iy\Vert \leq
		\limsup_{n\to\infty}  \frac{1}{n}\sum_{i=1}^{n} (\Vert T^i(z-y) \Vert + \Vert T^i z\Vert )
		< \varepsilon.
	\end{align*}
	By the arbitrariness of $\varepsilon$ again,
	one has
	\[
		\lim_{n\to\infty} \frac{1}{n}\sum_{i=1}^{n}\Vert T^iy\Vert=0.
	\]
 This shows that $X_0$ is closed.
\end{proof}

We divide the proof of Theorem~\ref{thm:trich-hypercity}
into two parts.
First by the dichotomy result of Theorem~\ref{thm:dich-mean-eq},
we have the following dichotomy on the orbits, which implies the case (1) in Theorem~\ref{thm:trich-hypercity} and its opposite side.

\begin{prop}\label{prop:dict-mean-asy}
	Let $T$ be a bounded linear operator on a separable Banach space $X$.
	If $T$ is hypercyclic, then either for every $x\in X$,
	\[
		\lim_{n\to\infty}\frac{1}{n}\sum_{i=1}^{n}\Vert T^ix\Vert=0
	\]
	or there exists a residual subset $X_0$ of $X$ such that for every $x\in X_0$
	\[
		\limsup_{n\to\infty} \frac{1}{n}\sum_{i=1}^n \Vert T^i x\Vert =\infty.
	\]
\end{prop}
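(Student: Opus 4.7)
The plan is to invoke the mean equicontinuity/mean sensitivity dichotomy of Theorem~\ref{thm:dich-mean-eq}. If $T$ is mean sensitive, then Proposition~\ref{prop:mean-sen-eq} produces exactly the residual set of vectors $x$ with $\limsup_{n\to\infty}\frac{1}{n}\sum_{i=1}^n\Vert T^ix\Vert=\infty$ required by the second alternative, so nothing remains in that case. Thus the real task is to handle the mean equicontinuous case and show that then \emph{every} vector is mean asymptotic to zero.

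To do that, I would consider
\[
X_0=\Bigl\{x\in X\colon \lim_{n\to\infty}\frac{1}{n}\sum_{i=1}^{n}\Vert T^ix\Vert=0\Bigr\},
\]
which by Proposition~\ref{prop:mean-eq}(2) is a closed subspace of $X$. The goal is to show $X_0=X$; since $X_0$ is closed, it suffices to exhibit a dense subset of $X$ contained in $X_0$. Hypercyclicity of $T$ provides such a set for free: pick a hypercyclic vector $x_0$ and consider its orbit $\{T^kx_0\colon k\geq 0\}$, which is dense in $X$. Because $0\in X$ belongs to the closure of this orbit, one has $\liminf_{n\to\infty}\Vert T^nx_0\Vert=0$, and more generally, for every $k\geq 0$,
\[
\liminf_{n\to\infty}\Vert T^n(T^kx_0)\Vert=\liminf_{n\to\infty}\Vert T^{n+k}x_0\Vert=0.
\]
Proposition~\ref{prop:mean-eq}(1) then places each $T^kx_0$ inside $X_0$. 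Thus the orbit of $x_0$ is contained in $X_0$, and by density together with the closedness of $X_0$ we conclude $X_0=X$, giving the first alternative.

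I do not anticipate a serious obstacle in this argument: once Theorem~\ref{thm:dich-mean-eq}, Proposition~\ref{prop:mean-sen-eq}, and Proposition~\ref{prop:mean-eq} are in hand, the whole proof reduces to the observation that a closed $T$\nobreakdash-invariant subspace containing the orbit of a hypercyclic vector must coincide with $X$. The only point worth double-checking is that $\liminf_{n\to\infty}\Vert T^nx_0\Vert=0$ really does follow from density of the orbit of $x_0$, but this is immediate from the fact that $0$ lies in the closure of that orbit.
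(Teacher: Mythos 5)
Your proposal is correct and follows essentially the same route as the paper: the dichotomy of Theorem~\ref{thm:dich-mean-eq}, Proposition~\ref{prop:mean-sen-eq} for the mean sensitive case, and Proposition~\ref{prop:mean-eq}(1)--(2) for the mean equicontinuous case. The only cosmetic difference is that you feed the dense orbit of a single hypercyclic vector into the closed subspace $X_0$, whereas the paper uses the dense set of all hypercyclic vectors; both work for the same reason.
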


\begin{proof}
	By Theorem~\ref{thm:dich-mean-eq}, $T$ is either mean equicontinuous or mean sensitive.
	First assume that $T$ is mean equicontinuous.
    For every hypercyclic vector $x\in X$, one has $\liminf_{n\to\infty}\Vert T^nx \Vert =0$. 
    Then by Proposition~\ref{prop:mean-eq}~(1), 
    \[
		\lim_{n\to\infty} \frac{1}{n}\sum_{i=1}^{n}\Vert T^ix\Vert=0.
	\]
 As the collection of hypercyclic vectors is dense in $X$,
 by Proposition~\ref{prop:mean-eq}~(2) 	one has for every $y\in X$,
	\[
		\lim_{n\to\infty} \frac{1}{n}\sum_{i=1}^{n}\Vert T^iy\Vert=0.
	\]

	Now assume that $T$ is mean sensitive.
	Then by Proposition~\ref{prop:mean-sen-eq},
	there exists a residual subset $X_0$ of $X$ such that for every $x\in X_0$
	\[
		\limsup_{n\to\infty} \frac{1}{n}\sum_{i=1}^n \Vert T^i x\Vert =\infty.
	\]
	This ends the proof.
\end{proof}

By the proof of Proposition~\ref{prop:dict-mean-asy}, we have the following consequence.

\begin{coro}\label{coro:hyper-mean-eq}
	Let $T$ be a bounded linear operator on a Banach space $X$.
	If $T$ is hypercyclic then $T$ is  mean equicontinuous if and only if for every $x\in X$,
	\[
		\lim_{n\to\infty}\frac{1}{n}\sum_{i=1}^{n}\Vert T^ix\Vert=0.
	\]
\end{coro}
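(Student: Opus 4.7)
The plan is to observe that the forward implication is essentially already recorded in the proof of Proposition~\ref{prop:dict-mean-asy}, and that the backward implication is immediate from the definition of mean equicontinuity, so the corollary follows with very little additional work.

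For the forward direction I would assume $T$ is hypercyclic and mean equicontinuous. Picking any hypercyclic vector $x\in X$, the density of its orbit forces $\liminf_{n\to\infty}\Vert T^n x\Vert=0$, so Proposition~\ref{prop:mean-eq}(1) gives $\lim_{n\to\infty}\frac{1}{n}\sum_{i=1}^n\Vert T^i x\Vert=0$. By Proposition~\ref{prop:mean-eq}(2) the set
\[
X_0=\Bigl\{x\in X\colon \lim_{n\to\infty}\frac{1}{n}\sum_{i=1}^{n}\Vert T^ix\Vert=0\Bigr\}
\]
is a closed subspace of $X$; since it contains the dense set of hypercyclic vectors, $X_0=X$. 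This is exactly the first half of the proof of Proposition~\ref{prop:dict-mean-asy}, so I would simply cite that proof.

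For the converse, hypercyclicity is not even needed. If every $x\in X$ satisfies $\lim_{n\to\infty}\frac{1}{n}\sum_{i=1}^n\Vert T^ix\Vert=0$, then for arbitrary $u,v\in X$ one applies the hypothesis to $x=u-v$ and uses the linearity of $T^i$ to conclude $\lim_{n\to\infty}\frac{1}{n}\sum_{i=1}^n\Vert T^iu-T^iv\Vert=0$. Given any $\varepsilon>0$, the definition of mean equicontinuity is then satisfied with any $\delta>0$ whatsoever, so $T$ is mean equicontinuous.

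Because both directions are essentially immediate consequences of material already established, I do not expect a genuine obstacle. The only subtlety is bookkeeping: the forward direction requires hypercyclicity (to produce a vector with $\liminf\Vert T^nx\Vert=0$ and to invoke density), while the converse is automatic and uses nothing about hypercyclicity. This asymmetry is precisely what makes the statement a corollary of the \emph{proof} of Proposition~\ref{prop:dict-mean-asy} rather than a separate theorem.
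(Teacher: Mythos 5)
Your proof is correct. The forward direction coincides exactly with the paper's: the paper proves this corollary simply by pointing to the proof of Proposition~\ref{prop:dict-mean-asy}, whose first half is precisely your argument (hypercyclic vector has $\liminf_{n\to\infty}\Vert T^nx\Vert=0$, apply Proposition~\ref{prop:mean-eq}(1), then use density of hypercyclic vectors and Proposition~\ref{prop:mean-eq}(2)). Where you diverge is the converse. The route implicit in the paper goes through the dichotomy of Theorem~\ref{thm:dich-mean-eq}: if $T$ were not mean equicontinuous it would be mean sensitive, and Proposition~\ref{prop:mean-sen-eq} would then produce a residual set of vectors $x$ with $\limsup_{n\to\infty}\frac{1}{n}\sum_{i=1}^n\Vert T^ix\Vert=\infty$, contradicting the hypothesis that every vector is mean asymptotic to zero. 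You instead verify mean equicontinuity directly from the definition via linearity, noting that $\Vert T^iu-T^iv\Vert=\Vert T^i(u-v)\Vert$ and that the hypothesis applied to $u-v$ makes every $\delta$ work. Your version is more elementary (it needs neither the dichotomy theorem nor Proposition~\ref{prop:mean-sen-eq}, nor even separability or hypercyclicity for that direction) and makes transparent that the asymmetry of the statement lies entirely in the forward implication; the paper's version has the advantage of requiring no new argument at all beyond the citation. Both are valid.
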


For a subset $A$ of $\mathbb{N}$, the \emph{upper density} of $A$ is defined by 
\[
\udens(A)=\limsup_{n\to\infty} \frac{ \#(A\cap [1,n])}{n},
\]
and the \emph{lower density} of $A$ is 
\[
\ldens(A)=\liminf_{n\to\infty} \frac{ \#(A\cap [1,n])}{n},
\]
where $\#(\cdot)$  denotes the number of elements of a finite set.

Let $T\colon X\to X$ be a bounded linear operator on a Banach space $X$.
We say that $T$ is \emph{frequently hypercyclic} provided that there exists a vector
$x\in X$, called a \emph{frequently hypercyclic vector} for $T$, such that for any non-empty open subset $U$ of $X$,
$\{n\in\mathbb{N}\colon T^nx\in U\}$ has positive lower density, and 
\emph{$\mathcal{U}$-frequently hypercyclic} provided that there exists a vector $x\in X$, called a \emph{$\mathcal{U}$-frequently hypercyclic vector} for $T$, such that such that for any non-empty open subset $U$ of $X$,
$\{n\in\mathbb{N}\colon T^nx\in U\}$ has positive upper density. 

\begin{rem}
Assume that $T\colon X\to X$ is $\mathcal{U}$-frequently hypercyclic.
For any $\mathcal{U}$-frequently hypercyclic vector $x\in X$, one has
\[
    \limsup_{n\to\infty}\frac{1}{n}\sum_{i=1}^n\Vert T^i x\Vert \geq 
     \limsup_{n\to\infty} \frac{1}{n} \#(\{1\leq i\leq n\colon T^i x\in X\setminus \{y\in X\colon \Vert y\Vert \leq 1\})>0.
\]
By Proposition~\ref{prop:dict-mean-asy}, there exists a residual subset $X_0$ of $X$ such that for every $x\in X_0$,
	\[
		\limsup_{n\to\infty} \frac{1}{n}\sum_{i=1}^n \Vert T^i x\Vert =\infty.
\]
\end{rem}

Now we consider the case (3) in Theorem~\ref{thm:trich-hypercity} and its opposite side.

\begin{prop}  \label{prop:dict-mean-proc}
	Let $T$ be an operator on a separable Banach space $X$.
	If $T$ is hypercyclic, then either there exists a residual subset $X_0$ of $X$ such that for every $x\in X_0$
	\[
		\liminf_{n\to\infty} \frac{1}{n}\sum_{i=1}^n \Vert T^i x\Vert =0,
	\]
	or  for every hypercyclic vector $x\in X$
	\[
		\lim_{n\to\infty} \frac{1}{n}\sum_{i=1}^n \Vert T^i x\Vert =\infty.
	\]
\end{prop}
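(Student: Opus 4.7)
The plan is to prove the contrapositive: if the second alternative fails, then the first holds. Set $f_n(x) = \frac{1}{n}\sum_{i=1}^n \Vert T^i x\Vert$ and, for each $L\ge 0$, $A_L = \{x\in X : \liminf_{n\to\infty} f_n(x) \le L\}$. Note that $Y_0 := A_0$ is the set appearing in the first alternative, and since $\lim f_n(x)=\infty$ is equivalent to $\liminf f_n(x)=\infty$, failure of the second alternative produces a hypercyclic vector $x_0$ with $L_0 := \liminf_{n\to\infty} f_n(x_0) < \infty$.

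A first, routine observation is that each $A_L$ is a $G_\delta$ subset of $X$: using the continuity of each $f_n$, one writes
\[
A_L = \bigcap_{k\ge 1}\bigcap_{N\ge 1}\bigcup_{n\ge N}\{x\in X : f_n(x) < L+1/k\},
\]
a countable intersection of open sets; in particular $A_0 = \bigcap_{k\ge 1} A_{1/k}$.

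The crux is a shift-invariance of $\liminf f_n$, obtained by the same Ces\`aro manipulation that appears in the proof of Proposition~\ref{prop:mean-eq}(1): for every $x\in X$ and $m\ge 0$,
\[
(n+m) f_{n+m}(x) = \sum_{i=1}^m \Vert T^i x\Vert + n\, f_n(T^m x),
\]
whence $\liminf_{n\to\infty} f_n(x) = \liminf_{n\to\infty} f_n(T^m x)$; equivalently, $\liminf f_n$ is constant along every $T$-orbit. Applied to $x_0$, this places the entire (dense) orbit of $x_0$ inside $A_{L_0}$, so $A_{L_0}$ is already a dense $G_\delta$.

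To upgrade this to residuality of $A_0$, I would use scalar homogeneity. Since scalar multiplication by any $\lambda\ne 0$ is a homeomorphism of $X$, the vector $\lambda x_0$ is again hypercyclic, and $f_n(\lambda x_0) = |\lambda|\, f_n(x_0)$ gives $\liminf_n f_n(\lambda x_0) = |\lambda|\, L_0$. Given any $L>0$, choosing $\lambda\ne 0$ with $|\lambda|\, L_0 < L$ (trivial when $L_0 = 0$) places the dense orbit of $\lambda x_0$ inside $A_L$, so every $A_L$ with $L>0$ is a dense $G_\delta$. The Baire category theorem applied to $Y_0 = \bigcap_{k\ge 1} A_{1/k}$ then yields the first alternative. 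The point requiring most care is the shift-invariance identity, which is where the Ces\`aro average meshes cleanly with the dynamics; everything else is a standard density-plus-Baire argument, together with the scaling trick that turns a single dense $G_\delta$ level set into a whole family of them indexed by arbitrarily small $L>0$.
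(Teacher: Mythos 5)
Your proof is correct and follows essentially the same route as the paper: the paper likewise passes to the contrapositive, uses the shift-invariance of $\liminf_n \frac{1}{n}\sum_{i=1}^n\Vert T^ix\Vert$ along the orbit of the hypercyclic vector, and scales (it puts $\{\tfrac{1}{k}T^mx_0\}$ inside the dense $G_\delta$ set $X_k=\{y:\liminf_n f_n(y)\le M/k\}$, exactly your $A_{L_0/k}$) before intersecting via Baire. The only differences are cosmetic: your levels $A_L$ are indexed by real $L>0$ rather than by $M/k$, and you make explicit the (correct) observation that $\lambda x_0$ still has dense orbit.
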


\begin{proof}
	Assume that there exists a hypercyclic vector $x\in X$ such that
	\[
		\liminf_{n\to\infty} \frac{1}{n}\sum_{i=1}^n \Vert T^i x\Vert <\infty.
	\]
	Let
	\[
		M=\liminf_{n\to\infty} \frac{1}{n}\sum_{i=1}^n \Vert T^i x\Vert.
	\]
	Then for every $m\in\mathbb{N}$,
	\[
		\liminf_{n\to\infty} \frac{1}{n}\sum_{i=1}^n \Vert T^i(T^m x)\Vert=M.
	\]
	For every $k\in \mathbb{N}$, let
	\[
		X_k=\biggl\{ y\in X\colon \liminf_{n\to\infty} \frac{1}{n}\sum_{i=1}^n \Vert T^iy\Vert\leq \tfrac{M}{k}\biggr\}.
	\]
	It is easy to see that
	\[
		X_k = \bigcap_{N=1}^\infty
		\biggl\{ y\in X\colon \exists n\geq N\text{ s.t. } \frac{1}{n}\sum_{i=1}^n \Vert T^iy\Vert< \tfrac{M}{k}+\tfrac{1}{N}\biggr\}.
	\]
	Then $X_k$ is a $G_\delta$ subset of $X$.
	As $x$ is hypercyclic, $\{T^m x\colon m\in \mathbb{N}\}$ is dense in $X$.
	Note that $\{\frac{1}{k}T^m x\colon m\in \mathbb{N}\}\subset X_k$.
	Therefore $X_k$ is dense in $X$.
	As
	\[
		X_0:=\biggl\{ y\in X\colon \liminf_{n\to\infty} \frac{1}{n}\sum_{i=1}^n \Vert T^iy\Vert=0\biggr\}=\bigcap_{k=1}^\infty X_k.
	\]
	we have that $X_0$ is residual in $X$.
\end{proof}

Now combining Propositions~\ref{prop:dict-mean-asy} and~\ref{prop:dict-mean-proc}, we get Theorem~\ref{thm:trich-hypercity}.

%%%%%%%%%%%%%%
\section{Proof of Theorem~\ref{thm:SOT-dense} and some examples}

We say that a bounded linear operator $T$ on a Banach space $X$ satisfies the \emph{Kitai criterion}
if there exist two dense subsets $E$ and $F$ of $X$ and a map $S\colon F\to F$ such that
$TSy=y$, $T^kx\to 0$ and $S^ky\to 0$ as $k\to\infty$ for any $x\in E$ and $y\in F$,
and a vector subset $Y$ of $X$ is an \emph{absolutely mean irregular manifold}
if every non-zero vector in $Y$ is absolutely mean irregular for $T$.

\begin{thm}[{\cite{BBP2020}*{Theorem 29}}] \label{thm:dense-mean-asy-vector}
Let $T$ be a bounded linear operator on a separable Banach space $X$.
If 
\[
\Bigl\{x\in X\colon \lim_{n\to\infty}\frac{1}{n}\sum_{i=1}^{n}\Vert T^ix\Vert=0\Bigr\}
\]
 is dense in $X$,
then either
\begin{enumerate}
    \item for every $x\in X$, $\displaystyle \lim_{n\to\infty}\frac{1}{n}\sum_{i=1}^{n}\Vert T^ix\Vert=0$, or
    \item $T$ admits a dense absolutely mean irregular manifold.
\end{enumerate}
\end{thm}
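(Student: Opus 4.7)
The plan is to use the dichotomy of Theorem~\ref{thm:dich-mean-eq} together with the structural properties already in Propositions~\ref{prop:mean-eq} and~\ref{prop:mean-sen-eq}, and then to upgrade a residual set of absolutely mean irregular vectors to a dense submanifold. Write
\[
	X_0:=\Bigl\{x\in X\colon \lim_{n\to\infty}\tfrac{1}{n}\sum_{i=1}^n\Vert T^ix\Vert=0\Bigr\},
\]
which is automatically a linear subspace of $X$ by the triangle inequality applied to the Ces\`aro averages. In the mean equicontinuous case, Proposition~\ref{prop:mean-eq}(2) shows that $X_0$ is closed, so density of $X_0$ forces $X_0=X$, which is alternative~(1).

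Assume henceforth that $T$ is mean sensitive. By Proposition~\ref{prop:mean-sen-eq} the set $Y_0:=\{x\in X\colon \limsup_n\tfrac{1}{n}\sum_{i=1}^n\Vert T^ix\Vert=\infty\}$ is residual, while the set $Z:=\{x\in X\colon \liminf_n\tfrac{1}{n}\sum_{i=1}^n\Vert T^ix\Vert=0\}$ is a $G_\delta$ (a countable intersection of open sets) containing the dense set $X_0$, and hence is residual. The set of absolutely mean irregular vectors, $Y_0\cap Z$, is therefore residual and in particular dense in $X$.

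The remaining and most delicate step is to upgrade this dense set to a dense linear submanifold $Y\subset X$ each of whose nonzero elements is absolutely mean irregular. The algebraic input is the following stability lemma: if $v$ is absolutely mean irregular and $x\in X_0$, then $x+\alpha v$ is absolutely mean irregular for every scalar $\alpha\neq 0$, which follows from the two-sided estimate $\bigl|\Vert T^i(x+\alpha v)\Vert-|\alpha|\,\Vert T^iv\Vert\bigr|\leq \Vert T^ix\Vert$ together with $\tfrac{1}{n}\sum_{i=1}^n\Vert T^ix\Vert\to 0$. Using this, I would fix a dense sequence $(e_n)\subset X$ and construct vectors $z_n=x_n+v_n$ with $\Vert z_n-e_n\Vert<2^{-n}$, where $x_n\in X_0$ approximates $e_n$ and $v_n\in Y_0\cap Z$ is of very small norm, chosen inductively so that every nontrivial finite linear combination $\sum_i\beta_iv_i$ again lies in $Y_0\cap Z$. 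The linear span of the $z_n$'s is then dense in $X$, and every nonzero element of it has the form $x'+\sum_i\beta_iv_i$ with $x'\in X_0$ and some $\beta_i\neq 0$, which the stability lemma certifies as absolutely mean irregular.

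The main obstacle is the inductive selection of the $v_n$'s: one needs simultaneous control of all nonzero scalar combinations of the previously chosen vectors, which is the technical heart of the argument. I would handle this by a Baire-category construction that exploits residuality of $Y_0\cap Z$ together with positive homogeneity of the Ces\`aro averages in the scalar parameters, in order to reduce an a~priori uncountable family of constraints to a countable one before intersecting the corresponding residual sets.
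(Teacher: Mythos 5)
First, a point of reference: the paper does not prove this statement at all --- it is quoted from \cite{BBP2020}*{Theorem 29} and used as a black box (to derive Corollary~\ref{coro:Kitai-criterion}). So there is no in-paper proof to compare against, and your attempt must be judged on its own. The first half of your argument is correct and complete: in the mean equicontinuous case, Proposition~\ref{prop:mean-eq}(2) makes the subspace $X_0$ closed, so density forces $X_0=X$; in the mean sensitive case, Proposition~\ref{prop:mean-sen-eq} gives a residual set with $\limsup=\infty$, the set $\{x\colon \liminf_n\frac1n\sum_{i=1}^n\Vert T^ix\Vert=0\}$ is a dense $G_\delta$ (it contains the dense set $X_0$), and the intersection is a residual set of absolutely mean irregular vectors. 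Your stability lemma is also correct: the reverse triangle inequality gives $\bigl|\frac1n\sum_{i=1}^n\Vert T^i(x+\alpha v)\Vert-|\alpha|\,\frac1n\sum_{i=1}^n\Vert T^iv\Vert\bigr|\le\frac1n\sum_{i=1}^n\Vert T^ix\Vert\to0$ for $x\in X_0$.

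The gap is in the last step, exactly where you locate ``the technical heart'': you never actually produce vectors $v_1,v_2,\dotsc$ all of whose nontrivial linear combinations are absolutely mean irregular, and the mechanism you propose --- positive homogeneity plus Baire category --- does not suffice as stated. Homogeneity lets you normalize only one coefficient; after setting the top coefficient $\beta_m=1$ you still face the uncountable family of constraints indexed by $u\in\operatorname{span}(v_1,\dotsc,v_{m-1})$, and residuality of the set of absolutely mean irregular vectors is useless against an uncountable intersection of its translates (that set is not a subspace, nor closed under addition). The correct countable reduction is of a different nature: one must construct, together with the vectors, time sequences with a domination structure --- for each $m$ a sequence $(n^{(m)}_l)$ along which the Ces\`aro averages of $v_m$ blow up while those of $v_1,\dotsc,v_{m-1}$ stay bounded, so that $\frac1n\sum_{i=1}^n\Vert T^i(u+v_m)\Vert\ge\frac1n\sum_{i=1}^n\Vert T^iv_m\Vert-\frac1n\sum_{i=1}^n\Vert T^iu\Vert$ blows up for every fixed $u$; and a single common sequence along which the averages of all the $v_k$ tend to $0$ simultaneously, so that every combination has $\liminf=0$. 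Neither simultaneity requirement follows from each $v_k$ individually lying in a residual set; securing them is the delicate inductive construction that constitutes the actual proof in \cite{BBP2020} (compare the analogous construction of dense distributionally irregular manifolds in \cite{BBPW2018}). As written, your argument establishes a residual, hence dense, set of absolutely mean irregular vectors, but not a dense absolutely mean irregular \emph{manifold} --- and the manifold is precisely what the paper needs from this theorem.
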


\begin{coro}\label{coro:Kitai-criterion}
Let $T$ be a bounded linear operator on a separable Banach space $X$.
If $T$ satisfies the Kitai criterion and has a non-trivial periodic point then
$T$ admits a dense absolutely mean irregular manifold.
\end{coro}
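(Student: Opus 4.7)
The plan is to invoke Theorem~\ref{thm:dense-mean-asy-vector} directly, after verifying its hypothesis and ruling out its first alternative. Let
\[
	Z=\Bigl\{x\in X\colon \lim_{n\to\infty}\frac{1}{n}\sum_{i=1}^{n}\Vert T^ix\Vert=0\Bigr\}.
\]
First I would observe that the Kitai criterion furnishes a dense set $E\subset X$ with $T^kx\to 0$ for every $x\in E$; since norm convergence to $0$ implies Cesàro convergence to $0$, we have $E\subset Z$, so $Z$ is dense in $X$. Hence the hypothesis of Theorem~\ref{thm:dense-mean-asy-vector} is satisfied and we are in case (1) or case (2) of that theorem.

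Next I would rule out case (1) using the non-trivial periodic point. Let $x_0\neq 0$ be such a point, say with $T^p x_0=x_0$ for some $p\geq 1$. Then along the arithmetic progression $i=jp$ we have $\Vert T^{jp}x_0\Vert=\Vert x_0\Vert>0$, so
\[
	\frac{1}{n}\sum_{i=1}^n \Vert T^i x_0\Vert \geq \frac{1}{n} \sum_{j=1}^{\lfloor n/p\rfloor} \Vert x_0\Vert \longrightarrow \frac{\Vert x_0\Vert}{p}>0,
\]
so $x_0\notin Z$. This prevents case (1) of Theorem~\ref{thm:dense-mean-asy-vector}, forcing case (2): $T$ admits a dense absolutely mean irregular manifold.

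There is no real obstacle here; the only thing to be careful about is confirming that the Kitai criterion indeed places $E$ inside $Z$ (which is immediate since $\Vert T^k x\Vert\to 0$ implies the Cesàro averages vanish) and that the periodic point is used only to exhibit a single vector outside $Z$. The hypercyclicity of $T$, also guaranteed by the Kitai criterion, is built into Theorem~\ref{thm:dense-mean-asy-vector} only implicitly via the framework, and is not otherwise needed in the argument.
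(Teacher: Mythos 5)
Your proposal is correct and follows essentially the same route as the paper: use the dense set $E$ from the Kitai criterion to verify the density hypothesis of Theorem~\ref{thm:dense-mean-asy-vector}, then use the non-trivial periodic point to exclude alternative (1). The only difference is that you spell out the computation showing the periodic point has non-vanishing Ces\`aro averages, which the paper leaves implicit.
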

\begin{proof}
Let $E$ be the dense subset of $X$ in the definition of the Kitai criterion.
For every $x\in E$, $\displaystyle \lim_{n\to\infty}\Vert T^n x\Vert  =0$, then
$\displaystyle \lim_{n\to\infty}\frac{1}{n}\sum_{i=1}^{n}\Vert T^ix\Vert=0$.
As $T$ has a non-trivial periodic point, the case (1) in Theorem \ref{thm:dense-mean-asy-vector} can not happen.
Then $T$ admits a dense absolutely mean irregular manifold.
\end{proof}

By the proof of Theorem~\ref{thm:trich-hypercity}, it is easy to see that the collection of 
absolutely mean irregular vectors is $G_\delta$.
If an operator admits a dense absolutely mean irregular manifold, then the collection of absolutely mean irregular vectors is
residual.
Therefore, Theorem~\ref{thm:SOT-dense} is a directly consequence of the following result.
\begin{thm}
	If $X$ is an infinite-dimensional separable Banach space,
	then the collection of hypercyclic operators admitting a dense absolutely mean irregular manifold is dense in  $\mathcal{L}(X)$ with respect to the strong operator topology.
\end{thm}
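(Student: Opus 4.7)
The proof reduces, via Corollary~\ref{coro:Kitai-criterion}, to showing that the set of operators on $X$ satisfying the Kitai criterion and possessing a non-trivial periodic point is SOT-dense in $\mathcal{L}(X)$; any such operator is automatically hypercyclic and admits a dense absolutely mean irregular manifold.

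Given a basic SOT-neighborhood $V_{x_1,\dotsc,x_n,\varepsilon,T_0}$, the plan is to produce inside it an operator $T$ of Rolewicz (weighted backward shift) type. After discarding linearly dependent vectors, assume $x_1,\dotsc,x_n$ are linearly independent. Since $X$ is separable and infinite-dimensional, Markushevich's theorem (applied to an extension of $x_1,\dotsc,x_n$ to a linearly independent total sequence) yields a biorthogonal system $(e_k,e_k^*)_{k\geq 1}$ in $X\times X^*$ with $e_i=x_i$ for $i\leq n$, $\overline{\mathrm{span}}\{e_k\colon k\geq 1\}=X$, and $\|e_k\|$ decaying geometrically for large $k$ (after rescaling). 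Define $T$ by prescribing $Tx_i$ for $i\leq n$ to lie within $\varepsilon$ of $T_0x_i$ (achieved by replacing $T_0x_i$ with a finite linear combination of the tail vectors), and on the tail by $Te_k=\lambda e_{k-1}$ for $k\geq N$ with some fixed $|\lambda|>1$; the finitely many intermediate values $Te_{n+1},\dotsc,Te_{N-1}$ are chosen to interpolate consistently. The geometric decay of $\|e_k\|$ guarantees boundedness of $T$ on $X$; the shift part gives a right inverse $Se_k=\lambda^{-1}e_{k+1}$, and both $T^jy$ (eventually zero on finitely supported $y$) and $S^jy$ (controlled by $\lambda^{-j}\|e_{k+j}\|$) tend to $0$ on $\mathrm{span}\{e_k\colon k\geq N\}$, yielding the Kitai criterion; and the Rolewicz-type series $u=\sum_{k\geq N}\lambda^{-(k-N)}e_k$ converges to a non-zero vector with $Tu=u$, producing a non-trivial fixed point.

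The main obstacle is the coupling between the head of the biorthogonal system (where $T$ must stay $\varepsilon$-close to $T_0$) and the tail (where $T$ must implement a clean weighted shift). Three requirements must be balanced simultaneously: (i) $T$ extends to a bounded operator on all of $X$, which forces quantitative control on the decay of $\|e_k\|$ against both the weights and $\|T_0\|$; (ii) the dense subspace used in the Kitai estimates is genuinely dense in $X$, which is not automatic, since removing the finite head $e_1,\dotsc,e_{N-1}$ from a Markushevich basis generally leaves only a proper closed subspace, so one must adjoin correction vectors of the form $x_i+u_i$ that are annihilated by $T$ in finitely many steps; and (iii) the fixed-point series lies in $X\setminus\{0\}$. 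The arbitrariness of $T_0$ feeds into (i) and (ii), and a careful choice of the biorthogonal system and of the weights absorbs these constraints; this coordination is the technical heart of the argument.
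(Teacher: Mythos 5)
Your reduction to ``Kitai criterion plus non-trivial periodic point'' via Corollary~\ref{coro:Kitai-criterion} matches the paper, but from there the routes diverge: the paper takes a \emph{single} operator $I+T$ with these properties (Shkarin's construction in \cite{S2008}) and obtains SOT-density for free from the SOT-density of its similarity orbit (the proof of \cite{GP2011}*{Theorem 8.18}) together with the similarity-invariance of the property; you instead attempt a direct construction of a Rolewicz-type weighted backward shift inside each basic SOT-neighborhood.

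The direct construction has a genuine gap at the step ``the geometric decay of $\Vert e_k\Vert$ guarantees boundedness of $T$''. A Markushevich basis is not a Schauder basis, so a general vector of $X$ has no convergent expansion $\sum_k e_k^*(x)e_k$; the only standard way to extend $Te_k=\lambda e_{k-1}$ from the linear span to all of $X$ is the nuclear one, $Tx=\sum_k e_k^*(x)\,\lambda e_{k-1}$, which requires $\sum_k\Vert e_k^*\Vert\,\Vert e_{k-1}\Vert<\infty$. But $\Vert e_k^*\Vert\,\Vert e_k\Vert\geq e_k^*(e_k)=1$, hence $\Vert e_k^*\Vert\,\Vert e_{k-1}\Vert\geq\Vert e_{k-1}\Vert/\Vert e_k\Vert$, and when $\Vert e_k\Vert\to 0$ the product of these ratios telescopes to $+\infty$, so the series diverges: making $\Vert e_k\Vert$ decay hurts rather than helps. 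If instead you restore boundedness by letting the shift weights tend to zero fast enough for the nuclear sum to converge, the resulting operator is compact, and a compact operator on an infinite-dimensional Banach space is never hypercyclic, so it cannot satisfy the Kitai criterion (which implies mixing). This is precisely why \cite{S2008} works with $I+T$, $T$ a compact quasinilpotent shift, rather than with a pure shift, and why on some Banach spaces (e.g.\ hereditarily indecomposable ones, where every operator is a scalar plus a strictly singular operator) no bounded Rolewicz-type shift exists at all. Repairing your argument would mean building $I+K$ with $K$ a compact weighted shift interpolating $Kx_i=(T_0-I)x_i$ on the head and re-proving Shkarin's Kitai estimates under these constraints --- at which point the paper's similarity-orbit shortcut is much cheaper. (Two smaller issues: with $Te_N=\lambda e_{N-1}$ your candidate fixed point satisfies $Tu=u+\lambda e_{N-1}\neq u$, so you need $Te_N=0$ or a similar adjustment; and the density problem you flag in (ii) --- the closed span of the tail of a Markushevich basis is a proper subspace --- is real and is left unresolved in your sketch.)
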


\begin{proof}
By the proof of Theorem 1.1 in \cite{S2008}, there exists a bounded linear operator $I+T$ on $X$ satisfying the Kitai criterion.
Then $I+T$ is mixing (see e.g. \cite{GP2011}*{Theorem 3.4}).
In particular, $I+T$ is hypercyclic.
Let $x_0$ be the point in the proof of  Lemma 2.5 in \cite{S2008}.
Then $\Vert x_0\Vert =1$ and $Tx_0=0$, which implies that $x_0$ is a non-trivial fixed point for $I+T$.
By Corollary~\ref{coro:Kitai-criterion},  $I+T$ admits a dense absolutely mean irregular manifold.
By the proof of Theorem 8.18 in \cite{GP2011}, the similarity orbit of $I+T$,
	\[
		\mathcal{S}(I+T)=\{A^{-1}(I+T)A; A \colon X\to X \text{ invertible}\},
	\]
is dense in $\mathcal{L}(X)$ with respect to the strong operator topology.
It is clear that the property of admitting a dense absolutely mean irregular manifold is invariant under similarity.
So the collection of hypercyclic operators with a dense absolutely mean irregular manifold is dense in  $\mathcal{L}(X)$ with respect to the strong operator topology.
\end{proof}

We will need the following result about a sufficient condition for the case (2) in Theorem~\ref{thm:trich-hypercity}. We refer the reader to \cite{BM2009}*{Chapter 5} about ergodic measures of an operator.

\begin{prop}\label{prop:measure-with-finite-first-moment}
	Let $T$ be a bounded linear operator on a separable Banach space $X$.
	If $T$ admits an ergodic measure $\mu$ with full support such that $\int_X \Vert x\Vert d\mu(x)<\infty$, then there is a residual subset of absolutely mean irregular vectors.
\end{prop}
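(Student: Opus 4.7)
The plan is to combine the Birkhoff pointwise ergodic theorem with the trichotomy of Theorem~\ref{thm:trich-hypercity}. The integrability hypothesis ensures that the function $f(x) = \Vert x\Vert$ lies in $L^1(\mu)$; since $T$ is $\mu$-invariant and $\mu$ is ergodic, Birkhoff's theorem yields, for $\mu$-a.e.\ $x \in X$,
\[
    \lim_{n\to\infty}\frac{1}{n}\sum_{i=1}^{n}\Vert T^i x\Vert = c,
\]
where $c := \int_X \Vert x\Vert d\mu(x)$. Because $\mu$ has full support and $X$ is nontrivial, $\mu$ cannot be the Dirac mass at $0$, so $c > 0$; by hypothesis also $c < \infty$.

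Next I would verify that $T$ is hypercyclic, in order to invoke Theorem~\ref{thm:trich-hypercity}. Let $\{U_j\}_{j\in\mathbb{N}}$ be a countable basis of nonempty open sets for $X$. Full support gives $\mu(U_j) > 0$ for every $j$, and applying Birkhoff's theorem to the indicators $\chi_{U_j}$ shows that for $\mu$-a.e.\ $x$ the orbit enters every $U_j$ with positive lower density. Hence $\mu$-a.e.\ $x$ is even frequently hypercyclic; in particular, $T$ is hypercyclic and the set of hypercyclic vectors has full $\mu$-measure.

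Now Theorem~\ref{thm:trich-hypercity} produces one of three alternatives, and I plan to rule out (1) and (3). Case (1) would force the Ces\`aro averages of $\Vert T^i x\Vert$ to vanish for every $x$, contradicting the existence of a $\mu$-typical $x$ for which the limit equals $c > 0$. Case (3) would force the same averages to diverge to infinity along every hypercyclic vector; but the intersection of the two full $\mu$-measure sets produced above is itself of full measure, and hence nonempty, supplying a hypercyclic vector whose Ces\`aro averages converge to the finite value $c$, which is again a contradiction. Only case (2) can occur, and this delivers the residual set of absolutely mean irregular vectors.

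The substantive step is the simultaneous use of Birkhoff's theorem for both the integrand $\Vert x\Vert$ and for indicator functions of a countable open basis, which is how the full-support and finite-first-moment hypotheses combine to produce a single point on which both types of conclusion hold. Beyond this elementary bookkeeping the trichotomy of Theorem~\ref{thm:trich-hypercity} does all the heavy lifting, so I do not anticipate any substantial technical obstacle.
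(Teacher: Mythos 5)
Your proposal is correct and follows essentially the same route as the paper: apply Birkhoff's ergodic theorem to $x\mapsto\Vert x\Vert$, use full support and ergodicity to find a hypercyclic vector whose Ces\`aro averages converge to the finite positive constant $\int_X\Vert z\Vert\,d\mu(z)$, and then rule out cases (1) and (3) of Theorem~\ref{thm:trich-hypercity}. The only difference is that you spell out, via Birkhoff applied to indicators of a countable basis, the standard fact that the hypercyclic vectors have full measure, which the paper simply cites as a consequence of ergodicity with full support.
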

\begin{proof}
	As the function $f\colon X\to \mathbb{R}$, $x\mapsto \Vert x\Vert$, is integrable, by the Birkhoff ergodic theorem (see e.g. \cite{BM2009}*{Theorem 5.3}) for $\mu$-a.e. vector $x\in X$,
	\[
		\lim_{n\to\infty}\frac{1}{n}\sum_{i=1}^{n}\Vert T^ix\Vert=\int_X \Vert z\Vert d\mu(z)>0.
	\]
	As $\mu$ is ergodic with full support, the collection of hypercyclic vectors has full measure.
	In particular, there exists a hypercyclic vector $x\in X$ such that
	\[
		\lim_{n\to\infty}\frac{1}{n}\sum_{i=1}^{n}\Vert T^ix\Vert=\int_X \Vert z\Vert d\mu(z)>0.
	\]
	So cases (1) and (3) in Theorem~\ref{thm:trich-hypercity} can not happen.
	This implies  that there is a residual subset of absolutely mean irregular vectors.
\end{proof}

If in addition $X$ is a separable Hilbert space.
The \emph{strong$^*$ operator topology} on $\mathcal{L}(X)$
is defined as follows:
any $T\in \mathcal{L}(X)$ has a neighborhood basis consisting of sets of the form
\begin{align*}
	V^*_{x_1,\dotsc,x_n,\varepsilon,T}=\{
	S\in \mathcal{L}(X)\colon \|Sx_i-Tx_i\|<\varepsilon, \|S^*x_i-T^*x_i\|<\varepsilon,\ i=1,\dotsc,n\},
\end{align*}
where $x_1,\dotsc,x_n\in X$, $\varepsilon>0$ and $T^*$ is the adjoint operator of $T$.
It is clear that the  strong operator topology is coarser than the strong$^*$ operator topology.
For $M>0$, the closed ball $\mathcal{L}_M(X)=\{T\in \mathcal{L}(X)\colon \|T\|\leq M\}$ with respect to strong operator topology or strong$^*$ operator topology becomes a Polish (separable and completely metrizable) space, see e.g. \cite{P1989}*{Section 4.6.2}.
For the operators on a Hilbert space, we can strengthen Theorem~\ref{thm:SOT-dense} as follows.

\begin{thm} \label{thm:Hilbert-SOT-star}
	Assume that $X$ is an infinite-dimensional separable Hilbert space and $M>1$.
	Then the intersection of the collection of hypercyclic operators with a residual subset of absolutely
	mean irregular vectors and $\mathcal{L}_M(X)$ is a dense $G_\delta$ subset of $\mathcal{L}_M(X)$ with respect to the strong$^*$ operator topology.
\end{thm}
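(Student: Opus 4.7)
The proof splits into two parts: showing the set in question is $G_\delta$, and upgrading the density assertion of Theorem~\ref{thm:SOT-dense} from the strong operator topology to the strong$^*$ operator topology while respecting the norm bound $M$.

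For the $G_\delta$ part, I would fix a countable open basis $\{V_j\}_{j\geq 1}$ of $X$. Since $T\mapsto T^n x$ is continuous in the strong operator topology on the bounded ball $\mathcal{L}_M(X)$ for every fixed $x\in X$ and $n\geq 1$, each set $\{T\in\mathcal{L}_M(X): T^n x\in V_k\}$ is open, and taking unions over $x\in V_j$ and $n\geq 1$ shows that the condition ``$T^n(V_j)\cap V_k\neq\emptyset$ for some $n$'' is open. The intersection over pairs $(j,k)$ exhibits the hypercyclic operators as a $G_\delta$ set. An analogous argument makes each of
\[
    \Bigl\{T:\exists\,x\in V_j,\ \exists\,n\geq 1,\ \tfrac{1}{n}\sum_{i=1}^{n}\Vert T^i x\Vert < 1/k\Bigr\},\quad \Bigl\{T:\exists\,x\in V_j,\ \exists\,n\geq 1,\ \tfrac{1}{n}\sum_{i=1}^{n}\Vert T^i x\Vert > k\Bigr\}
\]
open in the strong operator topology, and their countable intersection over $(j,k)$ is exactly the set of operators with a dense (equivalently, since the absolutely mean irregular vectors of any fixed operator form a $G_\delta$ subset of $X$, residual) set of absolutely mean irregular vectors. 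Because the strong operator topology is coarser than the strong$^*$ operator topology, both descriptions give $G_\delta$ sets in the strong$^*$ operator topology, and so does their intersection.

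For density, I follow the strategy of Theorem~\ref{thm:SOT-dense}. Shkarin's construction in \cite{S2008}*{Theorem 1.1, Lemma 2.5} yields an operator $R=I+T_0$ on $X$ that satisfies the Kitai criterion and has a unit fixed point $x_0$, with $T_0$ allowed to have arbitrarily small operator norm; since $M>1$, we may arrange $R\in\mathcal{L}_M(X)$, and by Corollary~\ref{coro:Kitai-criterion} such an $R$ has a dense absolutely mean irregular manifold, hence a residual set of absolutely mean irregular vectors. Given $T\in\mathcal{L}_M(X)$ and test data $x_1,\dotsc,x_n,\varepsilon$, enlarge the test vectors to the finite-dimensional subspace $Y=\mathrm{span}\{x_i,Tx_i,T^* x_i:1\leq i\leq n\}$ and orthogonally decompose $X=Y\oplus Y^\perp$. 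The plan is to build $S$ whose first block row and column in this decomposition are copied from $T$---which by the choice of $Y$ automatically gives $Sx_i=Tx_i$ and $S^* x_i=T^* x_i$ \emph{exactly}---and whose $(2,2)$-block is an operator on $Y^\perp$ obtained from $R$ via an isometric identification $Y^\perp\cong X$. The norm constraint $\|S\|\leq M$ can then be enforced by further replacing this $(2,2)$-block with a similarity of $R$ whose norm is controlled by $M$, which is possible because \cite{GP2011}*{Theorem 8.18} applied to $Y^\perp$ provides a strong-operator-dense family of similarities of $R$ inside $\mathcal{L}(Y^\perp)$ and because $M>1\geq\|R\|$ leaves the needed margin.

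The main obstacle is to verify that the resulting operator $S$ is itself hypercyclic with a residual set of absolutely mean irregular vectors, since the coupling through the $(2,1)$-block $T_{21}$ means $Y^\perp$ is not $S$-invariant and a pure direct-sum inheritance from $R$ would in any case fail (a finite-codimensional closed invariant subspace would preclude hypercyclicity). I expect to handle this by checking the Kitai criterion for $S$ directly: combine the dense Kitai sets $E, F\subset Y^\perp$ for $R$ with compensating choices on $Y$ to produce dense subsets of $X$ on which the iterates of $S$ and its right inverse converge to zero, exploiting the fact that the orthogonal decomposition $X=Y\oplus Y^\perp$ is compatible with adjoints in a Hilbert space. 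The fixed point $x_0$ remains a near-fixed point of $S$ in $Y^\perp$, and can be corrected to an exact non-trivial periodic point by a further small SOT$^*$ perturbation inside $V^*_{x_1,\dotsc,x_n,\varepsilon,T}$; Corollary~\ref{coro:Kitai-criterion} then delivers the dense absolutely mean irregular manifold for $S$. The simultaneous control of $T$, $T^*$, and the norm bound afforded by the Hilbert-space orthogonal decomposition is precisely what drives the strengthening from Theorem~\ref{thm:SOT-dense} to Theorem~\ref{thm:Hilbert-SOT-star}.
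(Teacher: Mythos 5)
Your first half---the $G_\delta$ argument via a countable basis, the SOT-continuity of $T\mapsto T^n$ on $\mathcal{L}_M(X)$, and the observation that a strong-operator $G_\delta$ set remains $G_\delta$ in the finer strong$^*$ topology---is essentially the paper's argument and is fine. The density half is where you part ways with the paper, and it is where your proposal breaks down. The paper does not perturb by hand at all: it quotes \cite{GMM2021}*{Corollary 2.12}, which says that the operators in $\mathcal{L}_M(X)$ admitting a full-support mixing Gaussian invariant measure are SOT$^*$-dense in $\mathcal{L}_M(X)$ for $M>1$, and then combines Fernique's integrability theorem with Proposition~\ref{prop:measure-with-finite-first-moment} to conclude that each such operator is hypercyclic with a residual set of absolutely mean irregular vectors. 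Your plan is to reprove this density statement from scratch by a block construction, and that is a substantially harder task than the sketch acknowledges.

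Concretely, two steps fail. First, take $T=\lambda I$ with $1<\lambda\le M$. Then $T_{12}=0$ and $T_{21}=0$, so your operator $S$ leaves $Y^{\perp}$ invariant; hence $S^{*}$ leaves the finite-dimensional space $Y$ invariant and therefore has an eigenvalue, which makes $S$ non-hypercyclic no matter what you place in the $(2,2)$-corner. Your ``check the Kitai criterion for $S$ directly'' paragraph cannot repair this, because the obstruction is structural: any $S$ that agrees with $T$ \emph{exactly} on the first block row and column inherits $T_{12}=0$ and keeps $Y^{\perp}$ invariant. (This is precisely why the genuine SOT$^*$-density constructions in \cite{GMM2021} perturb $T$ along an orthonormal basis adapted to $T$ rather than copying a block row and column.) Second, the norm bound is not controlled: $\Vert R\Vert>1$ for any hypercyclic $R$ (so the inequality $M>1\ge\Vert R\Vert$ you invoke is false), and even granting $\Vert R'\Vert<M$, the block operator with off-diagonal entries $T_{12},T_{21}$ of norm up to $M$ need not satisfy $\Vert S\Vert\le M$; no margin estimate is offered. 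The further claims---that \cite{GP2011}*{Theorem 8.18} yields similarities of $R$ with prescribed norm bounds, and that $x_{0}$ can be upgraded to an exact periodic point of $S$ by a small SOT$^*$ perturbation staying inside the prescribed neighborhood---are likewise unsupported. The clean route is the paper's: prove the $G_\delta$ description as you did, and obtain density by citing the Gaussian-measure density theorem together with Proposition~\ref{prop:measure-with-finite-first-moment}.
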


\begin{proof}
	Fix $M>1$. 
 We first show that the collection of hypercyclic operators with a residual subset of absolutely
	mean irregular vectors is a $G_\delta$
	subset of $\mathcal{L}_M(X)$ with respect to the strong operator topology.
 Let $\mathcal{U}$ be a countable topological basis of $X$ consisting of non-empty open subsets.
	For every two sets $U$ and $V$ in $\mathcal{U}$, let
	\[
		\mathcal{T}(U,V)=\{T\in \mathcal{L}(X)\colon
		\exists n\in\mathbb{N} \text{ s.t. } T^nU\cap V\neq\emptyset\}
	\]
	For every $n\in\mathbb{N}$, the map $\mathcal{L}_M(X)\to \mathcal{L}_M(X)$, $T\mapsto T^n$ is continuous with respect to the strong operator topology, see e.g. \cite{GMM2021}*{Lemma 2.1}.
	Then $\mathcal{T}_M(U,V)$ is an open subset of $\mathcal{L}_M(X)$.
	By the Birkhoff transitivity theorem (see e.g. \cite{GP2011}*{Theorem 1.16}), the collection of hypercyclic operators is the set
	\[
		\bigcap_{U,V\in\mathcal{U}} \mathcal{T}(U,V),
	\]
	which is a $G_\delta$ subset  of $\mathcal{L}_M(X)$.

	For every set $U$ in $\mathcal{U}$ and $N\in \mathbb{N}$, let
	\begin{align*}
		\mathcal{M}(U,N)=\biggl\{T\in \mathcal{L}_M(X)\colon &
		\exists n>N \text{ and } x\in U \text{ s.t. }
		\frac{1}{n}\sum_{i=1}^n \Vert T^ix\Vert<\frac{1}{N},\\
		 & \exists m>N \text{ and } y\in U \text{ s.t. }
		\frac{1}{m}\sum_{i=1}^m \Vert T^iy\Vert>N\biggl\}.
	\end{align*}
	Then $\mathcal{M}(U,N)$ is an open subset of $\mathcal{L}_M(X)$.
	It is not hard to check that the collection of operators with a residual subset of absolutely mean irregular vectors is the set
	\[
		\bigcap_{U\in\mathcal{U},N\in\mathbb{N}} \mathcal{M}(U,N),
	\]
	which is a $G_\delta$ subset of $\mathcal{L}_M(X)$. 
 
	Since the strong$^*$ operator topology is finer than the strong operator topology, this collection is  also  a $G_\delta$
	subset of $\mathcal{L}_M(X)$ with respect to  the strong$^*$ operator topology.
	By \cite{GMM2021}*{Corollary 2.12} the collection $\textrm{G-MIX}(X)$ of operators in $\mathcal{L}_M(X)$ which admit a mixing invariant measure in the Gaussian sense with full support is dense in $\mathcal{L}_M(X)$ with respect to  the strong$^*$ operator topology.
	By Fernique's integrability theorem (see e.g. \cite{BM2009}*{Exercise 5.5}),
	every Gaussian measure $\mu$ has finite moments of all orders, in particular, $\int_X \Vert x\Vert d\mu(x)<\infty$.
	According to Proposition~\ref{prop:measure-with-finite-first-moment},
	every operators in $\textrm{G-MIX}(X)$ admits a residual subset of absolutely mean irregular vectors. This end the proof.
\end{proof}

To conclude this paper, we provide some examples for each case of Theorem~\ref{thm:trich-hypercity}. Following \cite{LH2015}, we say that an operator $T\colon X\to X$
is \emph{absolutely Ces\'aro bounded} if
there exists a constant $C>0$ such that
\[
	\sup_{n\in\mathbb{N}}\frac{1}{n}\sum_{i=1}^n\Vert T^ix\Vert \leq C\Vert x\Vert
\]
for all $x\in X$.
It is shown in \cite{JL2024}*{Theorem 4.32} that
an operator is absolutely Ces\'aro bounded if and only if it mean equicontinuous.

\begin{exam}
	According to \cite{LH2015}*{Theorem 3.5 and Remark 3.6}, for each $1\leq p<\infty$
	there exists a mixing unilateral weighted  backward shift operator $T$ on $ \ell^p(\mathbb{N})$ which is absolutely Ces\'aro bounded.
	As it is also mean equicontinuous,
	by Corollary \ref{coro:hyper-mean-eq} for any $x\in \ell^p(\mathbb{N})$, one has
	\[
		\lim_{n\to\infty}\frac{1}{n}\sum_{i=1}^{n}\Vert T^ix\Vert=0.
	\]
 This operator $T$ satisfies the case (1) in the Theorem~\ref{thm:trich-hypercity}.
\end{exam}

By Theorems~\ref{thm:SOT-dense} and \ref{thm:Hilbert-SOT-star}, there are plenty of hypercyclic operators which satisfy the case (2) in the Theorem~\ref{thm:trich-hypercity}. 
The following example in \cite{BBP2020} is more subtle.

\begin{exam}
	According to  \cite{BBP2020}*{Theorem 37},
	for each $1\leq p<\infty$ there exists a bilateral shift $T$ on $\ell^p(v,\mathbb{Z})$ such that $T$ is hypercyclic and completely absolutely mean irregular, that is every non-zero vector is absolutely mean irregular.
\end{exam}

\begin{exam}
According to \cite{BR2015}*{Theorem 7}, there exists a frequently hypercyclic weighted shift $T$ on $c_0(\mathbb{Z})$ that is not distributionally chaotic.
In fact, it is proved in subsection 6.3 of \cite{BR2015} that the orbit
of every non-zero vector is not distributionally near $0$, that is for every non-zero vector $x\in X$ there exists $\varepsilon=\varepsilon(x)>0$ such that 
\[
    \ldens(\{n\in\mathbb{N} \colon \Vert T^n x\Vert \geq \varepsilon\})>0.
\]
Then for every non-zero vector $x\in X$,
\[
  \liminf_{n\to\infty}\frac{1}{n}\sum_{i=1}^n\Vert T^i x\Vert \geq  \varepsilon \ldens(\{n\in\mathbb{N} \colon \Vert T^n x\Vert \geq \varepsilon\})>0. 
\]
This shows that $T$ satisfies the case (3) in the Theorem~\ref{thm:trich-hypercity}.
\end{exam}

If one does not require the frequent hypercyclicity, 
there are relatively simple examples for  the case (3) in the Theorem~\ref{thm:trich-hypercity} as follows.

\begin{prop}
	There exists a hypercyclic weighted backward shift $B_w$ on $\ell^p(\mathbb{Z})$ with $1\leq p<\infty$
	such that for every non-zero vector $x\in \ell^p(\mathbb{Z})$,
	\[
		\lim_{n\to\infty} \frac{1}{n}\sum_{i=1}^n\Vert B_w^i x\Vert =\infty.
	\]
\end{prop}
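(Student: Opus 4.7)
The plan is to construct an explicit bilateral weighted shift $B_w$ with $B_w e_n = w_n e_{n-1}$ whose weights are almost everywhere constant but include very sparsely placed extreme crashes. Since $B_w^i e_j = \bigl(\prod_{m=j-i+1}^{j} w_m\bigr) e_{j-i}$ and distinct basis vectors have disjointly supported images, one has, for every $x=\sum x_j e_j\in \ell^p(\mathbb Z)$,
\[
    \Vert B_w^i x\Vert_p^p \;=\; \sum_{j\in\mathbb Z} |x_j|^p\, Q_{j,i}^p, \qquad Q_{j,i}:=\prod_{m=j-i+1}^{j}|w_m|.
\]
In particular, fixing any $j_0$ with $x_{j_0}\neq 0$ yields the coordinatewise lower bound $\Vert B_w^i x\Vert_p\geq |x_{j_0}|\, Q_{j_0,i}$, so it suffices to choose $(w_n)$ making $B_w$ hypercyclic and making $\frac{1}{n}\sum_{i=1}^n Q_{j,i}\to \infty$ for every $j\in\mathbb Z$.

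I take $w_m:=2$ for all $m\in\mathbb Z$ except at the sparse sequence of negative indices $-n_k+1$ ($k=1,2,\dots$), where I set $w_{-n_k+1}:=2^{-2n_k}$, with $(n_k)$ a rapidly growing sequence of positive integers (say $n_k:=k\cdot 10^k$). Since $\sup_m|w_m|=2$, the shift $B_w$ is bounded. Hypercyclicity is verified via the standard criterion for bilateral weighted backward shifts: it suffices to exhibit a subsequence $(m_k)$ such that for each $j\in\mathbb Z$ both $\prod_{i=0}^{m_k-1}|w_{j-i}|\to 0$ and $\prod_{i=1}^{m_k}|w_{j+i}|\to \infty$. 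With $m_k=n_k$ and $j$ fixed, for $k$ large the backward window $[j-n_k+1,j]$ contains precisely the bad indices $-n_1+1,\dots,-n_k+1$, giving a backward product $2^{n_k-k-2\sum_{\ell\leq k}n_\ell}\to 0$ (the $-2n_k$ term dominates the exponent); the forward window $[j+1,j+n_k]$ contains only $j$-dependent (hence finitely many) bad indices, so the forward product is at least $c_j\cdot 2^{n_k}\to +\infty$.

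For the Cesàro divergence, fix $j$. On each plateau of $i$-values between consecutive crashes, the count of bad indices inside the backward window $[j-i+1,j]$ is constant, so $Q_{j,i}$ has the form $2^{i-A_k^{(j)}}$ with $A_k^{(j)}\leq k+2\sum_{\ell\leq k}n_\ell$ growing much more slowly than $n_k$ under our choice of $(n_k)$. Hence $Q_{j,i}$ grows exponentially on each plateau, while at the sparse crashing instants (where $i$ hits a shifted analogue of $n_k$) it drops by a factor of roughly $2^{-2n_k-1}$. Consequently the partial sum $\sum_{i=1}^n Q_{j,i}$ accumulates enormous mass near the end of each plateau, and a two-case analysis (for $n$ near the beginning vs.\ the end of a plateau) combining $\sum_{i\leq n}Q_{j,i}\geq Q_{j,n}$ on a plateau with $\sum_{i\leq n}Q_{j,i}\geq 2^{(\text{last plateau peak})}$ shortly after a crash shows that $\frac{1}{n}\sum_{i=1}^n Q_{j,i}\to \infty$ as $n\to\infty$, as required.

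The main obstacle is the apparent conflict between the two required properties: hypercyclicity forces $Q_{j,i}\to 0$ along some subsequence, while Cesàro divergence forces $Q_{j,i}$ to dominate on average. The construction resolves this by placing bad indices so sparsely (and with crashes so deep) that $Q_{j,i}$ collapses only at the very thin subsequence $\{n_k\}$ while growing exponentially between crashes; the Cesàro mean is controlled by the long exponentially growing plateaus, while the rare and isolated collapses are still enough to guarantee hypercyclicity along $(n_k)$.
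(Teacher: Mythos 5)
Your strategy is essentially the paper's own: build a bilateral weighted backward shift whose weights push every running product upward except at sparse, localized crashes on the negative axis, deep enough that backward products tend to $0$ along a thin subsequence (giving hypercyclicity) but rare enough that the Ces\`aro averages of $\Vert B_w^ie_j\Vert$ still diverge, and then reduce to basis vectors via $\Vert B_w^ix\Vert\ge |x_{j_0}|\,Q_{j_0,i}$. The only real difference is cosmetic: the paper spreads each crash over a block of weights $\tfrac12$ followed by a compensating block of $2$'s, so the running product sits at a constant level $2^i$ on long plateaus, whereas you concentrate each crash into a single weight $2^{-2n_k}$ and let the product grow exponentially between crashes. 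Both variants work, but two steps of your verification are wrong as written and need repair.

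First, the hypercyclicity check fails for $j\ge 1$: the backward window $[j-n_k+1,\,j]$ does \emph{not} contain the index $-n_k+1$ (it lies just to the left of the window), so along $m_k=n_k$ the backward product for such $j$ equals $2^{\,n_k-(k-1)-2\sum_{\ell\le k-1}n_\ell}$, and since $2\sum_{\ell\le k-1}n_\ell\le \tfrac{20}{9}n_{k-1}\ll n_k$ this tends to $+\infty$, not to $0$. The characterization of hypercyclic bilateral shifts requires a single subsequence working for every $j$, so you should take, say, $m_k=n_k+k$; then $-n_k+1\in[j-m_k+1,\,j]$ for all $|j|\le k$, the backward product is at most $2^{\,n_k+k+|j|-2n_k}\to 0$, and the forward condition is unaffected. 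Second, your assertion that $A_k^{(j)}\le k+2\sum_{\ell\le k}n_\ell$ ``grows much more slowly than $n_k$'' is false: $A_k^{(j)}\ge 2n_k$. What the Ces\`aro argument actually needs --- and what $n_k=k\cdot 10^k$ does deliver --- is $A_k^{(j)}\ll n_{k+1}$ (indeed $A_k\approx\tfrac{20}{9}n_k$ while $n_{k+1}\ge 10n_k$), so that $Q_{j,i}=2^{\,i-A_k^{(j)}}$ recovers and becomes enormous well before the next crash; with that correction your two-case lower bound for $\tfrac1n\sum_{i\le n}Q_{j,i}$ goes through. These are local, fixable slips rather than a flaw in the construction.
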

\begin{proof}
	Let $a_i=2^{2+i}$, $b_i=a_i+i-1+i$, and $c_i=b_i+2i$ for $i\in\mathbb{N}$.
	Define a weight $w=(w_n)_{n\in\mathbb{Z}}$ as follows:
	\[
		w_n = \begin{cases}
			2,           & n> 0;                                                 \\
			\frac{1}{2}, & a_i+1\leq -n\leq b_i \text{ for some }i\in\mathbb{N}; \\
			2,           & b_i+1\leq -n\leq c_i \text{ for some }i\in\mathbb{N}; \\
			1,           & \text{otherwise},
		\end{cases}
	\]
	The product $\prod^0_{j=-n}w_j$ is decreasing for $-n$ in $[a_i+1,b_i]$ and increasing for $-n$ in  $[b_i+1,a_{i+1}]$.
	In particular, for every $i\in\mathbb{N}$,
	\[
		\prod^0_{j=-b_i}w_j=\frac{1}{2^i}, \text{ and }
		\prod^0_{j=-n}w_j=2^i \text{ for }  c_i\leq -n\leq a_{i+1}.
	\]
	Let $B_w$ be the  weighted backward shift on $\ell^p(\mathbb{Z})$  with the weight $w$.
	It is clear that
	\[
		\lim_{i\to +\infty} \prod^0_{j=-b_i}w_j=0,\quad \lim_{n\to +\infty} \prod^{n}_{j=1}w_j=\infty.
	\]
	As $\frac{1}{2}\leq |w_j|\leq 2$ for all $j\in\mathbb{Z}$, for any $k\in\mathbb{Z}$,
	\[
		\lim_{i\to +\infty} \prod^k_{j=k-b_i}w_j=0\quad
		\lim_{i\to +\infty} \prod^{b_i+k}_{j=k+1}w_j=\infty.
	\]
	By the hypercyclic characterization of  weighted backward shifts (see e.g. \cite{GP2011}*{Example 4.5}),  $B_w$ is hypercyclic.

	Note that
	\begin{align*}
		\lim_{n\to\infty} \frac{1}{n}\sum_{i=1}^n\Vert B_w^i e_0\Vert
		 & =\lim_{n\to\infty} \frac{1}{n}\sum_{i=1}^n \prod_{j=-i+1}^0 w_j \\
		 & \geq
		\lim_{i\to\infty} \frac{1}{c_i}  2^{i-1}(a_{i}-c_{i-1})=\infty.
	\end{align*}
	For all $j\in\mathbb{Z}$, as there exists a positive constant $\alpha=\alpha(j)$ such that $e_j=\alpha B_w^{-j} e_0$, we have
	\begin{align*}
		\lim_{n\to\infty} \frac{1}{n}\sum_{i=1}^n\Vert B_w^i e_j\Vert =
		\lim_{n\to\infty} \frac{1}{n}\sum_{i=1}^n\Vert B_w^i (\alpha B_w^{-j} e_0)\Vert =\infty.
	\end{align*}
	For every $x=(x_n)_{n\in\mathbb{Z}}\in\ell^p(\mathbb{Z})$, if $x\neq 0$,
	there exists $j\in \mathbb{Z}$ such that $x_j\neq 0$. Then
	\begin{align*}
		\lim_{n\to\infty} \frac{1}{n}\sum_{i=1}^n\Vert B_w^i x\Vert \geq
		\lim_{n\to\infty} \frac{1}{n}\sum_{i=1}^n\Vert B_w^i (x_je_j)\Vert =\infty.
	\end{align*}
	This ends the proof.
\end{proof}

%\bigskip 

\noindent \textbf{Acknowledgment.}
The author was supported in part by NSF of China (Grant nos.~12222110 and 12171298)
and would like to thank Professors Quentin Menet and Xinseng Wang for their helpful comments.
The author also expresses many thanks to the anonymous
referee, whose suggestions have substantially improved this paper.

\begin{bibsection}

	\begin{biblist}

		\bib{BM2009}{book}{
		author={Bayart, Fr\'{e}d\'{e}ric},
		author={Matheron, \'{E}tienne},
		title={Dynamics of linear operators},
		series={Cambridge Tracts in Mathematics},
		volume={179},
		publisher={Cambridge University Press, Cambridge},
		date={2009},
		pages={xiv+337},
		isbn={978-0-521-51496-5},
		review={\MR{2533318}},
		doi={10.1017/CBO9780511581113},
		}

  \bib{BR2015}{article}{
   author={Bayart, Fr\'{e}d\'{e}ric},
   author={Ruzsa, Imre Z.},
   title={Difference sets and frequently hypercyclic weighted shifts},
   journal={Ergodic Theory Dynam. Systems},
   volume={35},
   date={2015},
   number={3},
   pages={691--709},
   issn={0143-3857},
   review={\MR{3334899}},
   doi={10.1017/etds.2013.77},
}
        
		\bib{B1988}{book}{
			author={Beauzamy, Bernard},
			title={Introduction to operator theory and invariant subspaces},
			series={North-Holland Mathematical Library},
			volume={42},
			publisher={North-Holland Publishing Co., Amsterdam},
			date={1988},
			pages={xiv+358},
			isbn={0-444-70521-X},
			review={\MR{0967989}},
		}

		\bib{BBMP2011}{article}{
		author={Berm\'{u}dez, T.},
		author={Bonilla, A.},
		author={Mart\'{\i}nez-Gim\'{e}nez, F.},
		author={Peris, A.},
		title={Li-Yorke and distributionally chaotic operators},
		journal={J. Math. Anal. Appl.},
		volume={373},
		date={2011},
		number={1},
		pages={83--93},
		issn={0022-247X},
		review={\MR{2684459}},
		doi={10.1016/j.jmaa.2010.06.011},
		}
        
		\bib{BBP2020}{article}{
			author={Bernardes, N. C., Jr.},
			author={Bonilla, A.},
			author={Peris, A.},
			title={Mean Li-Yorke chaos in Banach spaces},
			journal={J. Funct. Anal.},
			volume={278},
			date={2020},
			number={3},
			pages={108343, 31},
			issn={0022-1236},
			review={\MR{4030290}},
			doi={10.1016/j.jfa.2019.108343},
		}

		\bib{BBPW2018}{article}{
			author={Bernardes, N. C., Jr.},
			author={Bonilla, A.},
			author={Peris, A.},
			author={Wu, X.},
			title={Distributional chaos for operators on Banach spaces},
			journal={J. Math. Anal. Appl.},
			volume={459},
			date={2018},
			number={2},
			pages={797--821},
			issn={0022-247X},
			review={\MR{3732556}},
			doi={10.1016/j.jmaa.2017.11.005},
		}

		\bib{GMM2021}{article}{
		author={Grivaux, S.},
		author={Matheron, \'{E}.},
		author={Menet, Q.},
		title={Linear dynamical systems on Hilbert spaces: typical properties and
				explicit examples},
		journal={Mem. Amer. Math. Soc.},
		volume={269},
		date={2021},
		number={1315},
		pages={v+147},
		issn={0065-9266},
		isbn={978-1-4704-4663-5; 978-1-4704-6468-4},
		review={\MR{4238631}},
		doi={10.1090/memo/1315},
		}

		\bib{GP2011}{book}{
			author={Grosse-Erdmann, K.-G.},
			author={Peris Manguillot, A.},
			title={Linear chaos},
			series={Universitext},
			publisher={Springer, London},
			date={2011},
			pages={xii+386},
			isbn={978-1-4471-2169-5},
			review={\MR{2919812}},
			doi={10.1007/978-1-4471-2170-1},
		}
  
		\bib{JL2024}{article}{
			author={Jiang, Zhen},
			author={Li, Jian},
			title={Chaos for endomorphisms of completely metrizable groups and linear operators on Fr\'echet spaces},
			date={2024},
			note={arXiv:2212.06304v2},
			%doi={10.48550/arXiv.2212.06304},
		}

		\bib{LTY2015}{article}{
			author={Li, Jian},
			author={Tu, Siming},
			author={Ye, Xiangdong},
			title={Mean equicontinuity and mean sensitivity},
			journal={Ergodic Theory Dynam. Systems},
			volume={35},
			date={2015},
			number={8},
			pages={2587--2612},
			issn={0143-3857},
			review={\MR{3456608}},
			doi={10.1017/etds.2014.41},
		}

		\bib{LH2015}{article}{
			author={Luo, Lvlin},
			author={Hou, Bingzhe},
			title={Some remarks on distributional chaos for bounded linear operators},
			journal={Turkish J. Math.},
			volume={39},
			date={2015},
			number={2},
			pages={251--258},
			issn={1300-0098},
			review={\MR{3311688}},
			doi={10.3906/mat-1403-41},
		}
        
		\bib{S2008}{article}{
			author={Shkarin, Stanislav},
			title={The Kitai criterion and backward shifts},
			journal={Proc. Amer. Math. Soc.},
			volume={136},
			date={2008},
			number={5},
			pages={1659--1670},
			issn={0002-9939},
			review={\MR{2373595}},
			doi={10.1090/S0002-9939-08-09179-X},
		}

		\bib{P1989}{book}{
			author={Pedersen, Gert K.},
			title={Analysis now},
			series={Graduate Texts in Mathematics},
			volume={118},
			publisher={Springer-Verlag, New York},
			date={1989},
			pages={xiv+277},
			isbn={0-387-96788-5},
			review={\MR{0971256}},
			doi={10.1007/978-1-4612-1007-8},
		}
  
	\end{biblist}
 
\end{bibsection}

\end{document}